\newtheorem{theorem}{Theorem}
\newtheorem{proposition}{Proposition}
\newtheorem{remark}{Remark}
\newtheorem{example}{Example}
\newtheorem{corollary}{Corollary}
\newtheorem{lemma}{Lemma}
\DeclareMathOperator{\ad}{ad}
\DeclareMathOperator{\SO}{SO}
\newcounter{table_counter}	
\newcommand{\Table}{\refstepcounter{table_counter}\newline\newline{\it Table \arabic{table_counter}}.}
\begin{document}

\vspace{0.5cm}
\title[Abnormal extremals of sub-Finsler  $q$-metrics on Lie groups]{Abnormal extremals of left-invariant sub-Finsler quasimetrics on four-dimensional Lie groups}
\author{V.~N.~Berestovskii, I.~A.~Zubareva}
\thanks{The first author is supported by the RFBR grant № 20-01-00661. 
The second author is supported by the program of fundamental scientific investigations of SD RAS № I.1.1., project № 0314--2019--0004.}
\address{Sobolev Institute of Mathematics,\newline
4 Koptyug Av., Novosibirsk, 630090, Russia}
\email{vberestov@inbox.ru}
\address{Sobolev Institute of Mathematics,\newline
13 Pevtsov Str., Omsk, 644099, Russia}
\email{i\_gribanova@mail.ru}
	
\begin{abstract}
Abnormal extremals on four-dimensional connected Lie groups with left-invariant sub-Finsler quasimetric, defined by a seminorm on a two-dimen\-si\-onal subspace of the Lie algebra generating the algebra, are found. 
In terms of structure constant of Lie algebra and supporting Minkowski function for the unit ball of seminorm on two-dimensional
subspace of Lie algebra, defining a quasimetric, we establish a criterion for strict abnormality of these extremals. 
 \vspace{2mm}
 
\noindent Mathematics Subject Classification (2010): 53C17, 53C22, 53C60, 49J15.
 \vspace{2mm}
 
\noindent {\it Keywords and phrases:} extremal, left-invariant sub-Finsler quasimetric, Lie algebra, 
 optimal control, polar, Pontryagin maximum principle,  (strictly) abnormal extremal, time-optimal problem.
\end{abstract}
\maketitle	

\section*{Introduction}

In \cite{Ber1}, the first author indicated that shortest arcs of a left-invariant (sub-)Finsler metric $d$ on any 
connected Lie group $G$, defined by a left--invariant bracktet generating distribution $D$ and a norm $F$ on 
$D(e)=\mathfrak{p}\subset TG_e=\mathfrak{g},$ are solutions of a left-invariant time-optimal problem with the 
closed unit ball $U$ of the normed vector space $(D(e),F)$ with zero center as the control region. The distribution $D$ 
is bracket generating if and only if the subspace $\mathfrak{p}$  generates the Lie algebra $(\mathfrak{g},[\cdot,\cdot])$ by Lie bracket $[\cdot,\cdot].$ Moreover, the statements about shortest arcs are also true for a pair  $(D(e),F)$ with a seminorm $F$, such that $F(u)>0$ for $0\neq u\in D(e),$ defining the left-invariant sub-Finsler quasimetric $d$ on $G$. 

The Pontryagin Maximum Principle (PMP) \cite{PBGM} gives some necessary conditions for solutions to the time-optimal problem. 
An extremal is any parameterized by the arc length curve in $G$, satisfying the Pontryagin Maximum Principle. 

An extremal can be normal or abnormal. Some extremals can be both normal and abnormal with respect to different controls (control functions); such extremals are called non-strictly abnormal. An abnormal extremal that is not non-strictly abnormal is called strictly abnormal.

In this paper, we solve the search problem for abnormal extremals on four-dimensinal connected Lie groups with a left-invariant sub-Finsler quasimetric, establish a criterion for the non-strict abnormality of these extremals, which allows us to formulate the criterion of their strict abnormality. Since the quasimetric is left-invariant we can assume that extremals initiate at the unit of the group. Every such abnormal extremal is some one-parametric subgroup of the Lie group.

A four-dimensional Lie group $G$ has abnormal extremals for all indicated seminorms $F$ on $\mathfrak{p}\subset\mathfrak{g}$
 if  $\dim(\mathfrak{p})=2$ and $\mathfrak{p}$ generates $\mathfrak{g}$. Such Lie groups $G$ and $\mathfrak{p}\subset\mathfrak{g}$ are considered (mainly) in this paper.

We were inspired to study these problems by memoir of W.~Liu and H.~Sussmann \cite{LZ} on strictly abnormal extremals of sub-Riemannian metrics, defined by bracket generating distributions of rank two. In particular, in \cite{LZ} they proved 
that any {\it regular} in their sense abnormal sub-Riemannian extremal is locally optimal, i.e. it is locally shortest arc, 
and presented an example of left-invariant metric on the Lie group $SO(3,\mathbb{R})\times \mathbb{R}$  with strictly abnormal extremal. Notice that all abnormal extremals of left-invariant sub-Finsler quasimetrics on four-dimensional connected Lie groups 
are regular in this sense.

Our solution to problems under consideration is naturally divided into three steps. At first, we need to select all four-dimensional real Lie algebras admitting two-dimensional subspaces generating them by Lie brackets. Then for any such Lie algebra it is necessary to find all pairwise nonequivalent classes of equivalent, i.e. transformable into each other by some automorphism of the Lie algebra, two-dimensional generating subspaces of this Lie algebra. At the end, for any seminorm on arbitrary representative of any such equivalence class we have to find abnormal extremals of the corresponding left-invariant sub-Finsler quasimetric on Lie group with a given Lie algebra and detect strictly abnormal among them, if they exist.

For all three steps we need a classification (up to isomorphism) of all four-dimensional real Lie algebras.
Such classification was obtained by G.M.~Mubarakzya\-nov in \cite{M}. We shall use in this paper a modified version of Mubarakzyanov's classification from paper \cite{BR} by R.~Biggs and C.~Remsing. In the paper, they indicate all automorphisms of 
Lie algebras under consideration and classified all connected Lie groups with these Lie algebras.

A.A.~Agrachev and D.~Barilari obtained in paper \cite{AB} an isometric classification of left-invariant sub-Riemannian metrics on three-dimensional Lie groups. Simultaneously they proved that, up to equivalence, all three-dimensional real Lie algebras, except 
$\mathfrak{sl}(2,\mathbb{R}),$ contain at most one generating two-dimensional subspace, and the algebra $\mathfrak{sl}(2,\mathbb{R})$ contains two such subspaces. We prove that the same result is true for one-dimensional extensions $\mathfrak{g}_{3}\oplus \mathbb{R}$ of these algebras for $\mathfrak{g}_3\neq\mathfrak{sl}(2,\mathbb{R}).$ Moreover, we  prove that, up to equivalence, the algebra
$\mathfrak{sl}(2,\mathbb{R})\oplus\mathbb{R}$ contains four generating two-dimensional subspaces, and every indecomposable Lie algebra $\mathfrak{g}_{4}$ contains at most one generating two-dimensional subspace. All other (main) results of the paper are obtained with application of results from \cite{Ber1}, \cite{PBGM},  \cite{BR}, \cite{BerZub}, \cite{H}.

\section{Preliminaries}

Recall the Engel theorem and the Cartan theorem, characterizing respectively {\it nilpotent} and {\it solvable} Lie algebras, \cite{J}, \cite{VGO}. 

A Lie algebra $(\mathfrak{g},[\cdot,\cdot])$ is nilpotent if and only if every linear operator
${\rm ad} X:= [X,\cdot]: \mathfrak{g}\rightarrow\mathfrak{g},$ $X\in\mathfrak{g},$ is nilpotent, i.e. $({\rm ad} X)^k=0$ for some natural number $k.$  A Lie algebra $(\mathfrak{g},[\cdot,\cdot])$ is solvable if and only if its derived ideal $\mathfrak{g}'=[\mathfrak{g},\mathfrak{g}]$ is nilpotent. 

A Lie algebra $\mathfrak{g}$ is {\it semisimple} (respectively, {\it simple}) if it has no nonzero solvable 
(respectively, improper) ideals (and $\mathfrak{g}'\neq 0$). Any simple Lie algebra is semisimple.

Lie algebras of interest to us have two types: $\mathfrak{g}_3\oplus\mathbb{R}$ and $\mathfrak{g}_4,$ where $\mathfrak{g}_3$ and $\mathfrak{g}_4$ are indecomposable three-dimensional and four-dimensional Lie algebras respectively.
Moreover, all  four-dimensional Lie algebras are solvable, except two ones:
$\mathfrak{g}_{3,7}\oplus\mathbb{R}=\mathfrak{so}(3,\mathbb{R})\oplus\mathbb{R}=\mathfrak{su}(2)\oplus\mathbb{R}$ and $\mathfrak{g}_{3,6}\oplus\mathbb{R}=\mathfrak{sl}(2,\mathbb{R})\oplus\mathbb{R};$ simply connected Lie groups with these Lie algebras
are the multiplicative group of nonzero quaternions and the universal covering of the group ${\rm GL}_e(2,\mathbb{R})$ 
respectively.

Table \ref{Tab:list} gives a classification of real Lie algebras of type $\mathfrak{g}_3\oplus\mathfrak{g}_1,$ where $\mathfrak{g}_1=\mathbb{R},$ and $\mathfrak{g}_4$ from \cite{BR}. For this, some basis $E_1,E_2,E_3,E_4$ is chosen in these algebras
and only nonzero brackets of this basis vectors are indicated. Let us give some comments about the Lie algebras from this Table.

Nilpotent algebras are only $\mathfrak{g}_{3,1}\oplus\mathfrak{g}_1=\mathfrak{h}\oplus\mathfrak{g}_1$
and $\mathfrak{g}_{4,1}=\mathfrak{eng},$ where $\mathfrak{h}=\mathfrak{nil}$ and $\mathfrak{eng}$ are Lie algebras of the Heisenberg group and the Engel group respectively. In addition, the Heisenberg and the Engel groups are so called {\it Carnot groups}. 
Maximal nilpotent ideals has dimensions 3 for the Lie algebra $\mathfrak{g}_{3,1}$ and 2 for all other solvable algebras of the type $\mathfrak{g}_3,$ and in the latter case it is commutative and equal to $\mathfrak{g}_3'=[\mathfrak{g}_3,\mathfrak{g}_3]$. 
Although the Heisenberg algebra has a two-dimensional generating subspace, it is easy to see that it is wrong for 
$\mathfrak{g}_{3,1}\oplus\mathfrak{g}_1.$ The Lie algebra $\mathfrak{g}_{3,3},$ and consequently,  $\mathfrak{g}_{3,3}\oplus\mathfrak{g}_1,$ do not have two-dimensional generating subspaces. All other Lie algebras of type $\mathfrak{g}_3\oplus\mathfrak{g}_1$ have them.

Maximal nilpotent ideals of Lie algebras of type $\mathfrak{g}_4\neq\mathfrak{g}_{4,1}$ have dimension 3 for  Lie algebras $\mathfrak{g}_{4,2}$ -- $\mathfrak{g}_{4,9}$ and  dimension 2 for the Lie algebra $\mathfrak{g}_{4,10}$; moreover,  maximal nilpotent ideals of Lie algebras $\mathfrak{g}_{4,2}$ -- $\mathfrak{g}_{4,6}$ are commutative, while maximal nilpotent ideals of Lie algebras  $\mathfrak{g}_{4,7}$ -- $\mathfrak{g}_{4,9}$ are isomorphic to the Heisenberg algebra \cite{M}. All Lie algebras $\mathfrak{g}_{4},$ with possible exception of some parameters in case of their parametric families, have two-dimensional generating subspaces.

Descriptions of Lie algebras $\mathfrak{g}_4,$ a choice of their bases, parameters in \cite{M} and \cite{BR}
coincide up to notation, except that for all families with parameters in \cite{BR} cases of extreme parameter values are highlighted,
while for the family  $\mathfrak{g}_{4,5}$ other parameter values are indicated. For Lie algebras $\mathfrak{g}_3$ it is true only for $\mathfrak{g}_{3,1}$ and $\mathfrak{g}_{3,7}.$ 

Every connected Lie group $G_{3,3}$ with Lie algebra $\mathfrak{g}_{3,3}$ and every connected Lie group $G$ with Lie algebra $\mathfrak{g}^{1,1}_{4,5}$ are simply connected and are characterized by the property that each their left-invariant Riemannian metric has constant negative sectional curvature, i.e. gives three-dimensional and four-dimensional Lobachevsky spaces \cite{Mil}.
Lie groups $G_{3,3}$ and $G$ are isomorphic to groups generated by homotheties (without rotations) and parallel translations of two-dimensional and three-dimensional Euclidean spaces, respectively.

\begin{table}[h]
\centering
\begin{tabular}{|p{4cm}|c|}
\hline
\centering{Type of Lie algebra}&Nonzero commutators\\
\hline
\centering{$\mathfrak{g}_{3,1}\oplus \mathfrak{g}_1$}&$[E_2,E_3] = E_1$\\
\hline
\centering{$\mathfrak{g}_{3,2}\oplus\mathfrak{g}_1$}&$ [E_2,E_3] = E_1-E_2,\quad [E_3,E_1] = E_1$\\
\hline
\centering{$\mathfrak{g}_{3,3}\oplus\mathfrak{g}_1$}&$[E_2,E_3] = -E_2,\quad [E_3,E_1] = E_1$\\
\hline
\centering{$\mathfrak{g}^0_{3,4}\oplus\mathfrak{g}_1$}&$[E_2,E_3] = E_1,\quad [E_3,E_1] = -E_2$\\
\hline
\centering{$\mathfrak{g}^{\alpha}_{3,4}\oplus\mathfrak{g}_1$,\quad\tiny{$0< \alpha\neq 1$}}&$[E_2,E_3]=E_1 -\alpha E_2,\quad [E_3,E_1] = \alpha E_1- E_2$\\
\hline
\centering{$\mathfrak{g}^{0}_{3,5}\oplus\mathfrak{g}_1$}&$[E_2,E_3] = E_1,\quad [E_3,E_1] = E_2$\\
\hline
\centering{$\mathfrak{g}^{\alpha}_{3,5}\oplus\mathfrak{g}_1$,\quad\tiny{$\alpha>0$}}&$[E_2,E_3] = E_1-\alpha E_2,\quad [E_3,E_1]=\alpha E_1+E_2$\\
\hline
\centering{$\mathfrak{g}_{3,6}\oplus\mathfrak{g}_1$}&$[E_2,E_3] = E_1,\quad [E_3,E_1] = E_2,\quad [E_1,E_2] = -E_3$\\
\hline
\centering{$\mathfrak{g}_{3,7}\oplus\mathfrak{g}_1$}&$[E_2,E_3] = E_1,\quad [E_3,E_1] = E_2,\quad [E_1,E_2] = E_3$\\
\hline
\centering{$\mathfrak{g}_{4,1}$}&$[E_2,E_4] = E_1,\quad [E_3,E_4] = E_2$\\
\hline
\centering{$\mathfrak{g}^{\alpha}_{4,2}$,\quad\tiny{$\alpha\neq 0$}}&$[E_1,E_4] =\alpha E_1,\quad [E_2,E_4] = E_2,\quad [E_3,E_4]=E_2+E_3$\\
\hline
\centering{$\mathfrak{g}_{4,3}$}&$[E_1,E_4] = E_1,\quad [E_3,E_4] = E_2$\\
\hline
\centering{$\mathfrak{g}_{4,4}$}&$[E_1,E_4] = E_1,\quad [E_2,E_4] = E_1+ E_2,\quad [E_3,E_4]=E_2+E_3$\\
\hline
\centering{$\mathfrak{g}^{\alpha,\beta}_{4,5}$}&$[E_1,E_4] = E_1,\quad [E_2,E_4] = \beta E_2,\quad [E_3,E_4]=\alpha E_3$\\
\centering{\tiny{$-1<\alpha\leq \beta\leq 1,\,\,\alpha\beta\neq 0$}}&$ $\\
\centering{\tiny{or $\alpha=-1,\,\,0<\beta\leq 1$}}&$ $\\
\hline
\centering{$\mathfrak{g}^{\alpha,\beta}_{4,6}$,\quad\tiny{$\alpha>0,\,\,\beta\in\mathbb{R}$}}&$[E_1,E_4] = \alpha E_1,\quad [E_2,E_4] = \beta E_2-E_3,\quad [E_3,E_4]=E_2+\beta E_3$\\
\hline
\centering{$\mathfrak{g}_{4,7}$}&$[E_1,E_4] = 2E_1,\quad [E_2,E_4] = E_2,$\\
\centering{$ $}&$[E_3,E_4]=E_2+E_3,\quad [E_2,E_3]=E_1$\\
\hline
\centering{$\mathfrak{g}^{-1}_{4,8}$}&$[E_2,E_3] = E_1,\quad [E_2,E_4] = E_2,\quad [E_3,E_4]=-E_3$\\
\hline
\centering{$\mathfrak{g}^{\alpha}_{4,8}$,\quad\tiny{$-1<\alpha\leq 1$}}&$[E_1,E_4] =(1+\alpha)E_1,\quad [E_2,E_4]=E_2,$\\
\centering{$ $}&$[E_3,E_4]=\alpha E_3,\quad [E_2,E_3]=E_1$\\
\hline
\centering{$\mathfrak{g}^{0}_{4,9}$}&$[E_2,E_3] = E_1,\quad [E_2,E_4] = -E_3,\quad [E_3,E_4]=E_2$\\
\hline
\centering{$\mathfrak{g}^{\alpha}_{4,9}$,\quad\tiny{$\alpha>0$}}&$[E_1,E_4] =2\alpha E_1,\quad [E_2,E_4]=\alpha E_2-E_3,$\\
\centering{$ $}&$[E_3,E_4]=E_2+\alpha E_3,\quad [E_2,E_3]=E_1$\\
\hline
\centering{$\mathfrak{g}_{4,10}$}&$[E_1,E_3] =E_1,\quad [E_2,E_3]=E_2,$\\
\centering{$ $}&$[E_1,E_4]=-E_2,\quad [E_2,E_4]=E_1$\\
\hline
\end{tabular}
\Table\label{Tab:list} Lie algebras $\mathfrak{g}_3\oplus\mathfrak{g}_1,$ $\mathfrak{g}_1=\mathbb{R},$ and $\mathfrak{g}_4$
\end{table}

\begin{table}[h]
\centering
\begin{tabular}{|p{4cm}|c|}
\hline
\centering{Type of Lie algebra}&Automorphisms\\
\hline
\centering{$\mathfrak{g}_{4,1}$}&\tiny{$\left(\begin{array}{cccc}
a_1a_2^2 & a_2a_3 & a_4 & a_5 \\
0 & a_1a_2 & a_3 & a_6 \\
0 & 0 & a_1 & a_7 \\
0 & 0 & 0 & a_2
\end{array}\right)$,\quad\tiny{$a_1a_2\neq 0$}}\\
\hline
\centering{$\mathfrak{g}^{\alpha}_{4,2}$,\,\,\tiny{$\alpha\neq 0$,\,\,$\alpha\neq 1$}}&\tiny{$\left(\begin{array}{cccc}
a_1 & 0 & 0 & a_4 \\
0 & a_2 & a_3 & a_5 \\
0 & 0 & a_2 & a_6 \\
0 & 0 & 0 & 1
\end{array}\right)$,\quad\tiny{$a_1a_2\neq 0$}}\\
\hline
\centering{$\mathfrak{g}_{4,3}$}&\tiny{$\left(\begin{array}{cccc}
a_1 & 0 & 0 & a_4 \\
0 & a_2 & a_3 & a_5 \\
0 & 0 & a_2 & a_6 \\
0 & 0 & 0 & 1
\end{array}\right)$,\quad\tiny{$a_1a_2\neq 0$}}\\
\hline
\centering{$\mathfrak{g}_{4,4}$}&\tiny{$\left(\begin{array}{cccc}
a_1 & a_2 & a_3 & a_4 \\
0 & a_1 & a_2 & a_5 \\
0 & 0 & a_1 & a_6 \\
0 & 0 & 0 & 1
\end{array}\right)$,\quad\tiny{$a_1\neq 0$}}\\
\hline
\centering{$\mathfrak{g}^{\alpha,\beta}_{4,5}$\linebreak\tiny{$-1<\alpha<\beta<1$,\,\,$\alpha\beta\neq 0$}\linebreak\tiny{or $\alpha=-1,\,\,0<\beta<1$}}&\tiny{$\left(\begin{array}{cccc}
a_1 & 0 & 0 & a_4 \\
0 & a_2 & 0 & a_5 \\
0 & 0 & a_3 & a_6 \\
0 & 0 & 0 & 1
\end{array}\right)$,\quad\tiny{$a_1a_2a_3\neq 0$}}\\
\hline
\centering{$\mathfrak{g}^{\alpha,\beta}_{4,6}$,\,\,\tiny{$\alpha>0$,\,\,$\beta\in\mathbb{R}$}}&\tiny{$\left(\begin{array}{cccc}
a_1 & 0 & 0 & a_4 \\
0 & a_2 & a_3 & a_5 \\
0 & -a_3 & a_2 & a_6 \\
0 & 0 & 0 & 1
\end{array}\right)$,\quad\tiny{$a_1\neq 0$,\quad $a_2^2+a_3^2\neq 0$}}\\
\hline
\centering{$\mathfrak{g}_{4,7}$}&\tiny{$\left(\begin{array}{cccc}
a_1^2 & -a_1a_3 & a_1a_4-(a_1+a_2)a_3 & a_5 \\
0 & a_1 & a_2 & a_4 \\
0 & 0 & a_1 & a_3 \\
0 & 0 & 0 & a_1
\end{array}\right)$,\quad\tiny{$a_1\neq 0$}}\\
\hline
\centering{$\mathfrak{g}^{-1}_{4,8}$}&\tiny{$\left(\begin{array}{cccc}
a_1a_2 & a_1a_3 & a_2a_4 & a_5 \\
0 & a_1 & 0 & a_4 \\
0 & 0 & a_2 & a_3 \\
0 & 0 & 0 & 1
\end{array}\right)$},\quad
\tiny{$\left(\begin{array}{cccc}
-a_1a_2 & -a_2a_4 & -a_1a_3 & a_5 \\
0 & 0 & a_1 & a_4 \\
0 & a_2 & 0 & a_3 \\
0 & 0 & 0 & -1
\end{array}\right)$,}\\
\centering{$ $}&\tiny{$a_1a_2\neq 0$}\\
\hline
\centering{$\mathfrak{g}^0_{4,8}$}&\tiny{$\left(\begin{array}{cccc}
a_1a_2 & a_3 & a_2a_4 & a_5 \\
0 & a_1 & 0 & a_4 \\
0 & 0 & a_2 & 0 \\
0 & 0 & 0 & 1
\end{array}\right)$,\quad\tiny{$a_1a_2\neq 0$}}\\
\hline
\centering{$\mathfrak{g}^{\alpha}_{4,8}$,\,\,\tiny{$-1<\alpha<1$,\,\,$\alpha\neq 0$}}&\tiny{$\left(\begin{array}{cccc}
a_1a_2 & -a_1a_3 & a_2a_4 & a_5 \\
0 & a_1 & 0 & a_4 \\
0 & 0 & a_2 & \alpha a_3 \\
0 & 0 & 0 & 1
\end{array}\right)$,\quad\tiny{$a_1a_2\neq 0$}}\\
\hline
\end{tabular}
\Table\label{Tab:auto} Automorphisms of Lie algebras $\mathfrak{g}_4$
\end{table}

\begin{table}[h]
\centering
\begin{tabular}{|p{4cm}|c|}
\hline
\centering{Type of Lie algebra}&Automorphisms\\
\hline
\centering{$\mathfrak{g}^0_{4,9}$}&\tiny{$\left(\begin{array}{cccc}
\sigma (a_1^2+a_2^2) &-\sigma a_1a_4+a_2a_5 & -a_1a_5-\sigma a_2a_4 & a_3 \\
0 & a_1 & a_2 & a_4 \\
0 & -\sigma a_2 & \sigma a_1 & a_5 \\
0 & 0 & 0 & \sigma
\end{array}\right)$,}\\
\centering{$ $}&\tiny{$\sigma=\pm 1,\quad a_1^2+a_2^2\neq 0$}\\
\hline
\centering{$\mathfrak{g}^{\alpha}_{4,9}$,\,\,\tiny{$\alpha>0$}}&\tiny{$\left(\begin{array}{cccc}
a_1^2+a_2^2 & \frac{-a_2(\alpha a_4-a_5)-a_1(a_4+\alpha a_5)}{1+\alpha^2} & \frac{a_1(\alpha a_4-a_5)-a_2(a_4+\alpha a_5)}{1+\alpha^2} & a_3 \\
0 & a_1 & a_2 & a_4 \\
0 & -a_2 & a_1 & a_5 \\
0 & 0 & 0 & 1
\end{array}\right)$,}\\
\centering{$ $}&\tiny{$a_1^2+a_2^2\neq 0$}\\
\hline
\centering{$\mathfrak{g}_{4,10}$}&\tiny{$\left(\begin{array}{cccc}
a_1 & \sigma a_2 & a_3 & \sigma a_4 \\
-a_2 & \sigma a_1 & a_4 & -\sigma a_3 \\
0 & 0 & 1 & 0 \\
0 & 0 & 0 & \sigma
\end{array}\right)$,\quad\tiny{$\sigma=\pm 1$,\quad $a_1^2+a_2^2\neq 0$}}\\
\hline
\end{tabular}
\Table\label{Tab:auto2} Automorphisms of Lie algebras $\mathfrak{g}_4$ (continuation)
\end{table}

\section{Algebraic results}
\label{alg} 
	
\begin{lemma} 
\label{base}	
Let  $(\mathfrak{g},[\cdot,\cdot])$ be a four-dimensional real Lie algebra, $\mathfrak{p}\subset \mathfrak{g}$ be a two-dimensional subspace, generating $\mathfrak{g}$ by Lie bracket $[\cdot,\cdot]$. Then there exists a basis $(e_1,e_2,e_3,e_4)$ in $\mathfrak{g}$ such that
$e_1, e_2\in \mathfrak{p}$ and 
\begin{equation}
\label{fghl}
[e_1,e_2]=e_3,\,\,[e_1,e_3]=e_4,\,\, C_{23}^4=0.
\end{equation}
For given vector $e_1\in\mathfrak{p}$ with this property there exist unique up to multiplication by nonzero real numbers
linearly independent vectors  $e_1,e_2,e_3,e_4$ in $\mathfrak{g}$ satisfying relations (\ref{fghl}). 
In addition, up to multiplication by nonzero real numbers, the vectors $e_2,$ $e_3$ do not depend on the choice of such vector $e_1.$
Moreover, if $[e_2,e_3]$ is not a linear combination of vectors $e_2$ and $e_3,$ then the vector $e_1$ can be chosen so that $C^2_{23}=0.$
\end{lemma}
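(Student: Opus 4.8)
The plan is to attach to the pair $(\mathfrak{g},\mathfrak{p})$ two canonical subspaces and read off all four assertions from their elementary properties, so that the only choices made are rescalings. Set $V_3:=\mathfrak{p}+[\mathfrak{p},\mathfrak{p}]$. Since $\dim\mathfrak{p}=2$, the space $[\mathfrak{p},\mathfrak{p}]$ is at most one-dimensional; were it contained in $\mathfrak{p}$, then $\mathfrak{p}$ would be a subalgebra and could not generate the four-dimensional $\mathfrak{g}$, so $\dim V_3=3$ and $[\mathfrak{p},\mathfrak{p}]=\mathbb{R}f$ with $f\notin\mathfrak{p}$. By the same argument $V_3$ is not a subalgebra (a three-dimensional subalgebra containing $\mathfrak{p}$ would contain the whole generated algebra $\mathfrak{g}$), so the bracket induces a nonzero skew-symmetric form $\bar\psi(x,y):=[x,y]\bmod V_3$ on $V_3$ with values in the one-dimensional space $\mathfrak{g}/V_3$. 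A skew form on a three-dimensional space has even rank, hence rank $2$, and its radical $R:=\{x\in V_3:[x,V_3]\subset V_3\}$ is exactly one-dimensional.

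The step I expect to be the main obstacle, and the only genuinely nonformal point, is showing $R\subset\mathfrak{p}$ together with the intrinsic description of the normalization. For the inclusion, note $\bar\psi(\mathfrak{p},\mathfrak{p})=[\mathfrak{p},\mathfrak{p}]\bmod V_3=0$, so $\mathfrak{p}$ is a two-dimensional isotropic subspace of $(V_3,\bar\psi)$; the quotient $V_3/R$ carries the induced nondegenerate (symplectic) form, in which the image of $\mathfrak{p}$ is isotropic and hence at most one-dimensional, so $\dim(\mathfrak{p}\cap R)\geq 2-1=1$ and therefore $R\subset\mathfrak{p}$. A short computation in a basis $f=[e_1,e_2]$ of $[\mathfrak{p},\mathfrak{p}]$ confirms $R=\mathbb{R}(p_2e_1-p_1e_2)$ with $p_i:=[e_i,f]\bmod V_3$.

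This yields existence, the normalization, uniqueness, and independence of $e_1$ at once. Choose $e_2$ to span $R$ and $e_1\in\mathfrak{p}\setminus R$ arbitrary, and put $e_3:=[e_1,e_2]$, $e_4:=[e_1,e_3]$. Then $e_3\in[\mathfrak{p},\mathfrak{p}]$ is nonzero and, since $e_1\notin R$, the value $\bar\psi(e_1,e_3)$ is nonzero, so $e_4\notin V_3$ and $(e_1,e_2,e_3,e_4)$ is a basis obeying the first two relations of (\ref{fghl}); because $e_2\in R$ and $e_3\in V_3$ we get $[e_2,e_3]\in V_3$, i.e. $C^4_{23}=0$. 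For the given-$e_1$ uniqueness, observe that $C^4_{23}=0$ is equivalent to $[e_2,e_3]\in V_3$ (as $e_4\notin V_3=\langle e_1,e_2,e_3\rangle$), and since $e_3=cf$ with $c\neq0$ this is equivalent to $[e_2,f]\in V_3$, that is to $e_2\in R$; hence $e_2$ is forced to span $R$ and is determined up to a nonzero scalar, after which $e_3=[e_1,e_2]$ and $e_4=[e_1,e_3]$ are determined. As $R$ and $[\mathfrak{p},\mathfrak{p}]$ are attached to $(\mathfrak{g},\mathfrak{p})$ with no reference to $e_1$, the lines $\mathbb{R}e_2=R$ and $\mathbb{R}e_3=[\mathfrak{p},\mathfrak{p}]$ do not depend on the choice of $e_1$, which is the ``in addition'' clause.

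For the final assertion I would keep $e_2$ spanning $R$ and use the remaining freedom $e_1\mapsto e_1+se_2$ (which stays in $\mathfrak{p}\setminus R$). One checks that this leaves $e_2$ and $e_3=[e_1,e_2]$ unchanged and alters only $e_4$, while in the expansion $[e_2,e_3]=C^1_{23}e_1+C^2_{23}e_2+C^3_{23}e_3$ (with no $e_4$-term, since $C^4_{23}=0$) the coefficient transforms as $C^2_{23}\mapsto C^2_{23}-s\,C^1_{23}$. The hypothesis that $[e_2,e_3]$ is not a combination of $e_2,e_3$ means exactly $C^1_{23}\neq0$, so the choice $s=C^2_{23}/C^1_{23}$ gives $C^2_{23}=0$. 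Once the intrinsic identification $\mathbb{R}e_2=R$ and the equivalence $C^4_{23}=0\Leftrightarrow e_2\in R$ are secured, everything else is routine linear algebra.
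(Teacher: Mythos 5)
Your proof is correct, but it is organized quite differently from the paper's. The paper proceeds by direct construction: it takes an arbitrary basis $(e_1,e_2)$ of $\mathfrak{p}$, sets $e_3=[e_1,e_2]$, $e_4=[e_1,e_3]$ (after possibly swapping $e_1,e_2$ so that $[e_1,e_3]\notin\mathrm{span}(e_1,e_2,e_3)$), and then kills the coefficient $C^4_{23}$ by the single substitution $e_2\mapsto e_2-C^4_{23}e_1$; uniqueness of the quadruple for fixed $e_1$ is asserted to ``follow from the construction,'' and the independence of $\mathbb{R}e_2$, $\mathbb{R}e_3$ from the choice of $e_1$ is verified by computing with the substitution $e_1\mapsto e_1+xe_2$. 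You instead characterize the distinguished line $\mathbb{R}e_2$ intrinsically as the radical $R$ of the $\mathfrak{g}/V_3$-valued skew form induced by the bracket on $V_3=\mathfrak{p}+[\mathfrak{p},\mathfrak{p}]$, proving $R\subset\mathfrak{p}$ from the isotropy of $\mathfrak{p}$ and a rank count; the equivalence $C^4_{23}=0\Leftrightarrow e_2\in R$ (for $e_2\in\mathfrak{p}$, which is the reading both you and the paper implicitly adopt for the uniqueness clause) then makes existence, uniqueness for fixed $e_1$, and the independence claim all immediate, since $R$ and $[\mathfrak{p},\mathfrak{p}]$ are defined without reference to any basis. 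Your route costs some setup but buys a genuinely canonical description of $e_2$ and $e_3$, turning the paper's terse justification of the second and third statements into one-line observations; the paper's route is shorter for bare existence. The treatment of the final assertion (shifting $e_1$ by $(C^2_{23}/C^1_{23})e_2$ when $C^1_{23}\neq0$, which is exactly the hypothesis that $[e_2,e_3]$ is not a combination of $e_2,e_3$ given $C^4_{23}=0$) is identical in both arguments.
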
	
	
\begin{proof}
At first, let us choose any basis $(e_1,e_2)$ in $\mathfrak{p}.$ Then, due to the condition on $\mathfrak{p},$ vectors
$e_1, e_2, e_3:=[e_1,e_2]$ are linearly independent, moreover, vectors $e_1, e_2, e_3$ and $[e_1,e_3]$ or $[e_2,e_3]$ are
linearly independent. We can assume that $(e_1,e_2,e_3,e_4:=[e_1,e_3])$ is a basis of the Lie algebra $\mathfrak{g}.$ 	
If $C_{23}^4=0$ then relations (\ref{fghl}) are satisfied. Otherwise, replacing $e_2$ with vector $e_2':=e_2-C_{23}^4e_1$ 
and denoting $e_2'$ via $e_2$, we get the required basis in $\mathfrak{g}.$ 

The second statement follows from the construction of the basis $(e_1,e_2,e_3,e_4).$ 

It is clear that the third statement is true for the vector $e_3.$ 

For each such basis, let us take any vector of the form $e^{\prime}_1:=e_1+xe_2,$ $x\in\mathbb{R}.$ Then 
$[e_2,e_3]=\sum_{k=1}^3 a_ke_k$ for some numbers $a_k\in\mathbb{R},$ $k=1,2,3$, and
$$[e^{\prime}_1,e_2]=e_3,\,\,e^{\prime}_4=[e^{\prime}_1,e_3]=e_4+x[e_2,e_3],\,\, [e_2,e_3]=\sum_{k=1}^3 a_ke_k=a_1e^{\prime}_1+(a_2-xa_1)e_2+a_3e_3,$$
i.e. formulas (\ref{fghl}) are preserved under replacement $e_1$ with $e^{\prime}_1.$ 
In addition, due to (\ref{fghl}), a vector collinear to the vector $e_2$ does not have the above properties of the vector $e_1$. It completes the proof of the third statement. 

By definition, $[e_2,e_3]=C^1_{23}e_1+C^2_{23}e_2+C^3_{23}e_3.$ Therefore, if $[e_2,e_3]$ is not a linear combination of vectors $e_2$ and $e_3,$ then $C^1_{23}\neq 0.$ If $C^2_{23}\neq 0,$ then we replace $e_1$ with vector 
$e^{\prime}_1=e_1 + (C^2_{23}/C^1_{23})e_2$ and get
$$C^1_{23}e_1+C^2_{23}e_2 = C^1_{23}(e^{\prime}_1-(C^2_{23}/C^1_{23})e_2)+C^2_{23}e_2 = C^1_{23}e^{\prime}_1.$$
\end{proof}

\begin{proposition}
\label{const}	
For the basis $(e_1,e_2,e_3,e_4)$ in $\mathfrak{g}$ from Lemma \ref{base}, we have	
$$C_{24}^1= C_{24}^2=0,\quad C_{24}^3=C_{23}^2,\quad C_{24}^4=C_{23}^3.$$	
\end{proposition}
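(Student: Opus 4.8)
The plan is to use that $e_4=[e_1,e_3]$, so that computing the bracket $[e_2,e_4]$ amounts to evaluating the double bracket $[e_2,[e_1,e_3]]$, which the Jacobi identity lets me rewrite entirely in terms of the brackets already pinned down in Lemma \ref{base}. This is the natural move because all structure is generated from $e_1,e_2$ via $[e_1,e_2]=e_3$ and $[e_1,e_3]=e_4$, so any bracket touching $e_4$ should be reducible to lower ones.

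First I would apply the Jacobi identity to the triple $(e_1,e_2,e_3)$ in its derivation form
$$[e_2,[e_1,e_3]]=[[e_2,e_1],e_3]+[e_1,[e_2,e_3]].$$
Since $[e_2,e_1]=-e_3$, the first term on the right is $[-e_3,e_3]=0$, so the whole expression collapses to $[e_1,[e_2,e_3]]$.

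Next I would expand $[e_2,e_3]=C_{23}^1e_1+C_{23}^2e_2+C_{23}^3e_3$, where the crucial point is that the $e_4$-component is absent by the hypothesis $C_{23}^4=0$ inherited from Lemma \ref{base}; this is precisely what keeps the computation closed, since an $e_4$-term would force the a~priori unknown bracket $[e_1,e_4]$ to appear. Applying $\ad e_1$ and using $[e_1,e_1]=0$, $[e_1,e_2]=e_3$, $[e_1,e_3]=e_4$ then gives $[e_2,e_4]=C_{23}^2e_3+C_{23}^3e_4$. Reading off the coordinates of $[e_2,e_4]$ in the basis $(e_1,e_2,e_3,e_4)$ yields $C_{24}^1=C_{24}^2=0$, $C_{24}^3=C_{23}^2$, and $C_{24}^4=C_{23}^3$, as claimed.

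I do not expect a genuine obstacle here: the entire proposition is a single Jacobi-identity computation. The only points requiring care are the sign bookkeeping in the Jacobi relation (which makes the $[[e_2,e_1],e_3]$ term vanish) and the explicit invocation of $C_{23}^4=0$ to suppress the term $[e_1,e_4]$; once those are handled, the four identities fall out by comparing coefficients.
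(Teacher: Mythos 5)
Your proof is correct and is essentially identical to the paper's: both reduce $[e_2,e_4]$ to $[e_1,[e_2,e_3]]$ via the Jacobi identity (you in derivation form, the paper in cyclic form, which is the same computation) and then expand using $C_{23}^4=0$. No issues.
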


\begin{proof}
Due to (\ref{fghl}), the Jacobi identity 
$$[e_1,[e_2,e_3]]+[e_2,[e_3,e_1]]+[e_3,[e_1,e_2]]=0$$ 
is equivalent to the equality $[e_2,e_4]=[e_1,[e_2,e_3]],$ i.e.	
$$\sum\limits_{k=1}^4C_{24}^ke_k=\left[e_1,\sum\limits_{m=1}^4C_{23}^me_m\right]=C_{23}^2e_3+C_{23}^3e_4+C_{23}^4\sum\limits_{k=1}^4C_{14}^ke_k= C_{23}^2e_3+C_{23}^3e_4.$$
This gives us Proposition \ref{const}.  
\end{proof}	

\begin{proposition}
\label{solv3}
A real Lie algebra $\mathfrak{g}=\mathfrak{g}_3\oplus\mathfrak{g}_1$  with solvable indecomposable subalgebra $\mathfrak{g}_3$ has a two-dimensional generating subspace if and only if the operator ${\rm ad} E_3: \mathfrak{g}'\rightarrow\mathfrak{g}'$ is non-degenerate and has  non-eigenvectors, in other words, $\mathfrak{g}_3\neq \mathfrak{g}_{3,1}$, $\mathfrak{g}_3\neq\mathfrak{g}_{3,3}.$ 
In addition, two-dimensional subspace $\mathfrak{p}\subset \mathfrak{g}$ generates the algebra $\mathfrak{g}$
if and only if it has a basis of the form $(e_1,e_2=a+b),$ where $e_1\in \mathfrak{g}_3,$ $e_1\notin\mathfrak{g}',$ $a=\beta E_4,$ $\beta\neq 0,$ $b\in\mathfrak{g}'$ is a non-eigenvector vector of the operator ${\rm ad} E_3.$ Statements of Lemma \ref{base} are true for vectors $e_1,e_2,$ $[e_2,e_3]=0$ and vectors $e_1,e_2'=b$ generate the Lie algebra $\mathfrak{g}_3.$
\end{proposition}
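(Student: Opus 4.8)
The plan is to exploit that $E_4$ spans the central summand $\mathfrak{g}_1$, so the projection $\pi\colon\mathfrak{g}\to\mathfrak{g}_3$ with kernel $\mathbb{R}E_4$ is a Lie algebra homomorphism, $\mathfrak{g}'=\mathfrak{g}_3'$, and every bracket lies in $\mathfrak{g}'$. For a solvable indecomposable $\mathfrak{g}_3$ one has $\mathfrak{g}_3=\mathfrak{g}'\oplus\mathbb{R}E_3$ with $\mathfrak{g}'$ abelian, $\ad E_3$ preserving $\mathfrak{g}'$, and $\dim\mathfrak{g}'=1$ exactly for $\mathfrak{g}_{3,1}$. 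First I would record the structural identity $\langle\mathfrak{p}\rangle=\mathfrak{p}+\mathfrak{h}$, where $\mathfrak{h}$ is spanned by the iterated brackets of length $\ge 2$ and $\mathfrak{h}\subseteq\mathfrak{g}'$ (since $[\mathfrak{g},\mathfrak{g}]\subseteq\mathfrak{g}'$ and $E_4$ is central). This yields the bound $\dim\langle\mathfrak{p}\rangle\le 2+\dim\mathfrak{g}'$; for $\mathfrak{g}_{3,1}$ the right-hand side is $3<4$, so $\mathfrak{g}_{3,1}\oplus\mathbb{R}$ has no generating two-dimensional subspace, and there $\ad E_3$ is the zero (degenerate) operator. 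This disposes of the case $\mathfrak{g}_3=\mathfrak{g}_{3,1}$.

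Next I would reduce to $\mathfrak{g}_3$ through $\pi$. If $\mathfrak{p}$ generates $\mathfrak{g}$, then $\pi(\mathfrak{p})$ generates $\mathfrak{g}_3$; as $\dim\mathfrak{g}_3=3$ this forces $\dim\pi(\mathfrak{p})=2$, hence $\mathfrak{p}\cap\mathbb{R}E_4=0$ and $\pi|_{\mathfrak{p}}$ is an isomorphism. I then characterize two-dimensional generating subspaces $\mathfrak{q}\subset\mathfrak{g}_3$ in the case $\dim\mathfrak{g}'=2$: since $\mathfrak{q}\not\subseteq\mathfrak{g}'$ (an abelian $\mathfrak{g}'$ cannot generate) and $\mathfrak{q}$ cannot inject into $\mathfrak{g}_3/\mathfrak{g}'\cong\mathbb{R}E_3$, one has $\dim(\mathfrak{q}\cap\mathfrak{g}')=1$, say $\mathfrak{q}=\operatorname{span}(\bar e_1,\bar b)$ with $\bar e_1\notin\mathfrak{g}'$ (nonzero $E_3$-component $c$) and $0\neq\bar b\in\mathfrak{g}'$. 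Because $\mathfrak{g}'$ is abelian, $[\bar e_1,\bar b]=c\,\ad E_3(\bar b)$ and the iterated brackets are $c^{k}(\ad E_3)^{k}\bar b$; hence $\langle\mathfrak{q}\rangle=\mathfrak{g}_3$ iff $\{\bar b,\ad E_3(\bar b)\}$ is a basis of $\mathfrak{g}'$, i.e. iff $\bar b$ is a non-eigenvector of $\ad E_3$. Existence of such $\bar b$ means $\ad E_3$ is non-scalar, which excludes exactly $\mathfrak{g}_{3,3}$ (where $\ad E_3=\mathrm{id}$). This establishes the first statement.

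The heart of the argument is lifting this description back to $\mathfrak{p}$, and this is the step I expect to be the main obstacle, because it is exactly where the extra hypothesis of \emph{non-degeneracy} (rather than mere non-scalarity) enters. Let $b'\in\mathfrak{p}$ be the unique vector with $\pi(b')=\bar b$, so $b'=\bar b+\gamma E_4$, and write the other basis vector as $e_1'=\bar e_1+\delta E_4$. Since $\mathfrak{h}\subseteq\mathfrak{g}'$, the only $E_4$-components in $\langle\mathfrak{p}\rangle$ come from $\mathfrak{p}$ itself, and the brackets produce only $c^{k}(\ad E_3)^{k}\bar b$ for $k\ge 1$. If $\gamma=0$ then $\bar b\in\mathfrak{p}\cap\mathfrak{g}'$ and $\langle\mathfrak{p}\rangle\subseteq\operatorname{span}(e_1')\oplus\mathfrak{g}'$ has dimension $\le 3$, so $\mathfrak{p}$ does not generate; thus generation forces $\gamma\neq 0$. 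When $\gamma\neq 0$, to recover $E_4=\gamma^{-1}(b'-\bar b)$ one needs $\bar b\in\langle\mathfrak{p}\rangle$, i.e. $\mathfrak{g}'\subseteq\langle\mathfrak{p}\rangle$; but the available vectors of $\mathfrak{g}'$ are $\{\ad E_3(\bar b),(\ad E_3)^{2}\bar b\}$, which span $\mathfrak{g}'$ (given that $\{\bar b,\ad E_3(\bar b)\}$ does) precisely when $\ad E_3$ is invertible. Conversely, if $\ad E_3$ is non-degenerate and $\bar b$ a non-eigenvector with $\gamma\neq 0$, the same computation gives $\mathfrak{g}'\subseteq\langle\mathfrak{p}\rangle$, then $E_4\in\langle\mathfrak{p}\rangle$, then $E_3$ from $e_1'$, so $\langle\mathfrak{p}\rangle=\mathfrak{g}$. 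This yields the criterion ``$\ad E_3$ non-degenerate with non-eigenvectors.''

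Finally I would extract the asserted normal form and the Lemma \ref{base} data. Replacing $e_1'$ by $e_1:=e_1'-(\delta/\gamma)b'\in\mathfrak{g}_3$ removes its $E_4$-part while keeping the $E_3$-component $c\neq 0$, so $e_1\in\mathfrak{g}_3\setminus\mathfrak{g}'$; setting $e_2:=b'=b+\beta E_4$ with $b:=\bar b$, $\beta:=\gamma\neq 0$ and $a:=\beta E_4$ puts $\mathfrak{p}$ in the stated form $(e_1,e_2=a+b)$, and since $\pi(e_1)=e_1$ differs from $\bar e_1$ only by a multiple of $b$, the pair $e_1,e_2'=b$ still generates $\mathfrak{g}_3$. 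I then check directly that $e_3=[e_1,e_2]=[e_1,b]=c\,\ad E_3(b)\in\mathfrak{g}'$ and $e_4=[e_1,e_3]\in\mathfrak{g}'$, that these four vectors form a basis by non-degeneracy, and that $[e_2,e_3]=[b,e_3]=0$ because $b,e_3\in\mathfrak{g}'$ and $\mathfrak{g}'$ is abelian; in particular $C_{23}^{4}=0$, in agreement with Lemma \ref{base}.
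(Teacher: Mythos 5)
Your proof is correct and follows essentially the same route as the paper's: both normalize a basis of $\mathfrak{p}$ to the form $(e_1\in\mathfrak{g}_3\setminus\mathfrak{g}',\ e_2=\beta E_4+b)$ with $b\in\mathfrak{g}'$, and verify generation by computing $e_3=c\,\ad E_3(b)$, $e_4=c^2(\ad E_3)^2(b)$ and using the observation that $\ad E_3(b)$ is again a non-eigenvector when $\ad E_3$ is invertible. Your only real organizational difference is to factor through the projection onto $\mathfrak{g}_3$ before lifting; and the non-degeneracy you single out as the delicate extra hypothesis is in fact automatic once $\dim\mathfrak{g}'=2$, since $\mathfrak{g}'=[E_3,\mathfrak{g}']$ forces $\ad E_3$ to be surjective on $\mathfrak{g}'$.
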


\begin{proof}
It follows from Table \ref{Tab:list} that all these Lie algebras $\mathfrak{g}$ have central subalgebra $\mathfrak{g}_1,$ 
two-dimensional commutative ideal $\mathfrak{I}$, spanned by vectors $E_1,$ $E_2,$ and $\mathfrak{g}'=[E_3,\mathfrak{I}]\subset\mathfrak{I},$  moreover  $\mathfrak{g}'$ is one-dimensional
(and is a center of $\mathfrak{g}$) only in case $\mathfrak{g}_3=\mathfrak{g}_{3,1}$.

Assume that a subspace $\mathfrak{p}\subset\mathfrak{g}$ generates $\mathfrak{g}$ and $(e,f)$ is some basis for  
$\mathfrak{p}.$ Then in consequence of what has been said none of vectors $e,f$ can lie in $\mathfrak{I}$ or be collinear to $E_4.$
In addition, since any vector collinear to a basis vector can be added to onether basis vector, we can assume that
$e\in \mathfrak{g}_3,$ $e\notin\mathfrak{J},$ $f=a+b,$ where $a=\beta E_4,$ $\beta\neq 0,$ $b\in\mathfrak{I},$ moreover $b\neq 0.$
Furthermore, $b$ cannot be an eigenvector of the operator ${\rm ad} E_3:\mathfrak{J}\rightarrow\mathfrak{J}$ and this operator is non-degenerate; otherwise $\mathfrak{p}$ is a subalgebra or is contained in some three-dimensional subalgebra  $\mathfrak{f}\subset\mathfrak{g}$ and does not generate $\mathfrak{g}.$ 

As a consequence of the last statement, every two-dimensional subspace $\mathfrak{p}\subset\mathfrak{g}$ does not generate $\mathfrak{g}$ if  $\mathfrak{g}_3 =\mathfrak{g}_{3,3}$ or  $\mathfrak{g}_3=\mathfrak{g}_{3,1}.$

In all other cases under consideration, $\mathfrak{g}'=\mathfrak{I},$ the operator ${\rm ad}E_3: \mathfrak{g}'\rightarrow\mathfrak{g}'$ is non-degenerate and has non-eigenvectors. Let a subspace $\mathfrak{p}\subset \mathfrak{g}$ has a basis $(e_1=e,e_2=f)$ as in the statement of Proposition \ref{solv3}. Then $[e_1,e_2]=a_3[E_3,b]$ for some $a_3\neq 0$ and $[E_3,b]\in\mathfrak{g}'$ is also a non-eigenvector of the operator ${\rm ad} E_3$; otherwise $b=({\rm ad} E_3)^{-1}([E_3,b])$  would be an eigenvector.
Consequently, $e_3:=[e_1,e_2]\in\mathfrak{g}'$, $e_4:=[e_1,e_3]\in\mathfrak{g}'$, and vectors $e_3,e_4$ are not collinear. Then  vectors $e_1,\dots,e_4$ are linearly independent, i.e. $e_1,e_2$ generate the algebra $\mathfrak{g}$. In addition, $[e_2,e_3]=[b,e_3]=0,$ because $b,e_3\in\mathfrak{g}',$ so all statements of Lemma \ref{base} are true for vectors $e_1,e_2$ .

The very last statement of Proposition \ref{solv3} is obvious.	
\end{proof}

Proposition \ref{solv3} and paper \cite{AB} imply the following Corollary.

\begin{corollary}
Any two generating two-dimensional subspaces of Lie algebra $\mathfrak{g}$ from Proposition \ref{solv3} are transformed into each other by some automorphism of algebra  $\mathfrak{g}.$ 	
\end{corollary}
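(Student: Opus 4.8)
The plan is to carry every generating two-dimensional subspace $\mathfrak{p}\subset\mathfrak{g}$ onto one fixed normal form by an automorphism of $\mathfrak{g}$; once this is done, any two such subspaces are equivalent to the common normal form, hence to each other. I fix once and for all a non-eigenvector $\epsilon_2\in\mathfrak{g}'=\mathfrak{I}$ of $\ad E_3$ and take as normal form the subspace $\mathfrak{p}_0:=\langle E_3,\,E_4+\epsilon_2\rangle$, which is generating by the criterion of Proposition \ref{solv3}. Throughout I would exploit three families of automorphisms of $\mathfrak{g}=\mathfrak{g}_3\oplus\langle E_4\rangle$: (a) the extensions $\psi\oplus\operatorname{id}_{\langle E_4\rangle}$ of automorphisms $\psi\in\Aut(\mathfrak{g}_3)$; (b) the center rescalings $E_4\mapsto\lambda E_4$ (with $\lambda\neq0$), identity on $\mathfrak{g}_3$; and (c) the central shear $\chi_\mu$ defined by $E_3\mapsto E_3+\mu E_4$ and identity on $E_1,E_2,E_4$. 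That each of these is an automorphism follows at once from $E_4$ being central and $\mathfrak{I}=\mathfrak{g}'$ being preserved; in particular for $\chi_\mu$ one checks $[\chi_\mu E_3,\chi_\mu E_j]=[E_3,E_j]=\chi_\mu[E_3,E_j]$ for $j=1,2$.

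Now the reduction. By Proposition \ref{solv3} write $\mathfrak{p}=\langle e_1,\ \beta E_4+b\rangle$ with $e_1\in\mathfrak{g}_3\setminus\mathfrak{I}$, $\beta\neq0$, and $b\in\mathfrak{I}$ a non-eigenvector; the last assertion of that Proposition says $\langle e_1,b\rangle$ is a generating plane of $\mathfrak{g}_3$. Since $\mathfrak{g}_3$ is solvable and therefore distinct from $\mathfrak{sl}(2,\mathbb{R})$, the result of \cite{AB} guarantees that $\mathfrak{g}_3$ possesses, up to equivalence, a single generating plane; hence there is $\psi\in\Aut(\mathfrak{g}_3)$ with $\psi\langle e_1,b\rangle=\langle E_3,\epsilon_2\rangle$. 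Applying $\psi\oplus\operatorname{id}$ and using that $\psi$ preserves $\mathfrak{I}=\mathfrak{g}_3'$, so that $\psi b\in\langle\epsilon_2\rangle$ and $\psi e_1=\delta E_3+\eta\epsilon_2$ with $\delta\neq0$, I may rescale the two basis vectors of the image and bring $\mathfrak{p}$ to the form $\langle E_3+c\epsilon_2,\ \beta'E_4+\epsilon_2\rangle$ for some $c\in\mathbb{R}$ and $\beta'\neq0$. A center rescaling (b) with $\lambda=1/\beta'$ fixes the first vector and turns the second into $E_4+\epsilon_2$, giving $\langle E_3+c\epsilon_2,\ E_4+\epsilon_2\rangle$. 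Finally the shear $\chi_c$ sends $E_3+c\epsilon_2\mapsto E_3+cE_4+c\epsilon_2$ and fixes $E_4+\epsilon_2$; subtracting $c$ times the second basis vector from the first produces $E_3$, so $\mathfrak{p}$ is carried exactly onto $\mathfrak{p}_0$.

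The conceptual core is the reduction to the three-dimensional statement of \cite{AB}; the genuinely delicate point, and what I expect to be the main obstacle, is the residual parameter $c$, which records the $\epsilon_2$-component of the lift of $E_3$ inside $\mathfrak{p}$ and is not touched by automorphisms coming from $\mathfrak{g}_3$ alone. The key realization that removes it is that adding a multiple of the central generator $E_4$ to $E_3$ is an automorphism — a feature special to the decomposable algebras $\mathfrak{g}_3\oplus\mathbb{R}$, where $E_4$ spans a central direct summand — and that this shear acts on the normal form precisely by $c\mapsto c-\mu$. The remaining bookkeeping (injectivity of the projection $\pi\colon\mathfrak{g}\to\mathfrak{g}_3$ on $\mathfrak{p}$, and the automorphism-invariance of the canonical subspaces $\mathfrak{g}'=\mathfrak{I}$ and $Z(\mathfrak{g})=\langle E_4\rangle$ used to locate $b$ and the $E_4$-direction) is routine and uses only the structural facts recorded in the proof of Proposition \ref{solv3}.
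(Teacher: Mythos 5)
Your argument is correct and follows the same route the paper intends: the paper offers no written proof beyond asserting that the Corollary follows from Proposition \ref{solv3} together with the uniqueness (up to $\Aut(\mathfrak{g}_3)$) of the generating plane in $\mathfrak{g}_3$ from \cite{AB}, and your reduction to the normal form $\langle E_3,\,E_4+\epsilon_2\rangle$ is exactly the detailed version of that implication. The one genuinely non-obvious ingredient you supply --- the central shear $E_3\mapsto E_3+\mu E_4$, which lies outside $\Aut(\mathfrak{g}_3)\times\Aut(\mathfrak{g}_1)$ and kills the residual parameter $c$ --- is correctly identified and correctly verified to be an automorphism, so the proof is complete.
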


\begin{theorem}
\label{simple}
A real Lie algebra $\mathfrak{g}=\mathfrak{g}_3\oplus\mathfrak{g}_1$ with simple subalgebra $\mathfrak{g}_3$ has a two-dimensional generating subspace. In addition, a two-dimensional subspace $\mathfrak{p}\subset\mathfrak{g}$ generates the Lie algebra $\mathfrak{g}$ if and only if 1) $\mathfrak{p}_1=p(\mathfrak{p})\neq\mathfrak{p}$ for the projection $p:\mathfrak{g}\rightarrow\mathfrak{g}_3$; 2) the subspace $\mathfrak{p}_1$ is two-dimensional; 3) the restriction of the Killing form $k$ of the Lie algebra $\mathfrak{g}_3$ to $\mathfrak{p}_1$ is non-degenerate. For $\mathfrak{g}_{3,7}\oplus\mathfrak{g}_1$ (respectively, $\mathfrak{g}_{3,6}\oplus\mathfrak{g}_1$) there exists exactly one (respectively, exist four) equivalence class(es) of such subspaces $\mathfrak{p}\subset\mathfrak{g}.$ Moreover, there exist unique up to multiplication by nonzero real numbers basis vectors 
in $\mathfrak{p}$ such that either $e_1\in\mathfrak{g}_3,$ $e_2\notin\mathfrak{g}_3$ and $k(e_1,p(e_2))=0,$ or $e_1\notin\mathfrak{g}_3,$
$e_2\in \mathfrak{g}_3$ and both vectors $p(e_1),e_2$ are isotropic; in addition, vectors $e_1,e_2$ satisfy Lemma \ref{base} with  conditions $0\neq [e_2,e_3] || e_1,$ i.e. $C^1_{23}\neq 0,$ $C^2_{23}=C^3_{23}=0$ in the first case and $0\neq [e_2,e_3] || e_2,$ i.e. $C^2_{23}\neq 0,$ $C^1_{23}=C^3_{23}=0$ in the second case.
\end{theorem}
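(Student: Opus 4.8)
The plan is to exploit that $\mathfrak{g}_1$ is the center of $\mathfrak{g}$ and $\mathfrak{g}_3=\mathfrak{g}'$ its derived algebra, so that the projection $p\colon\mathfrak{g}\to\mathfrak{g}_3$ is a Lie algebra epimorphism with kernel $\mathfrak{g}_1$, every bracket lies in $\mathfrak{g}_3$, and every automorphism of $\mathfrak{g}$ preserves both $\mathfrak{g}_3$ and $\mathfrak{g}_1$; hence $\Aut(\mathfrak{g})\cong\Aut(\mathfrak{g}_3)\times\mathbb{R}^{*}$, acting block-diagonally and scaling $E_4$. First I would record the Killing-form geometry of the simple algebra $\mathfrak{g}_3$: invariance of $k$ gives $k([X,Y],X)=k([X,Y],Y)=0$, so for independent $X,Y$ the nonzero vector $[X,Y]$ spans the $k$-orthogonal line $\mathfrak{p}_1^{\perp}$ of $\mathfrak{p}_1=\mathrm{span}(X,Y)$. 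Consequently $\mathfrak{p}_1$ is a subalgebra iff $\mathfrak{p}_1^{\perp}\subseteq\mathfrak{p}_1$ iff $k|_{\mathfrak{p}_1}$ is degenerate; equivalently a two-dimensional $\mathfrak{p}_1$ generates $\mathfrak{g}_3$ iff $k|_{\mathfrak{p}_1}$ is non-degenerate. This is the origin of condition 3.

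Next I would prove the generation criterion for $\mathfrak{g}$. Since $p$ is an epimorphism, $\langle\mathfrak{p}\rangle$ surjects onto $\langle p(\mathfrak{p})\rangle$, so generation forces $\mathfrak{p}_1=p(\mathfrak{p})$ to generate $\mathfrak{g}_3$; this needs $\dim\mathfrak{p}_1=2$ (condition 2) and $k|_{\mathfrak{p}_1}$ non-degenerate (condition 3), and it rules out $\mathfrak{p}\subseteq\mathfrak{g}_3$, i.e.\ $\mathfrak{p}_1\neq\mathfrak{p}$ (condition 1). For sufficiency: if $\mathfrak{p}_1$ generates $\mathfrak{g}_3$, then $\langle\mathfrak{p}\rangle$ surjects onto $\mathfrak{g}_3$, so its derived algebra --- contained in $\mathfrak{g}_3$ and surjecting onto the simple $\mathfrak{g}_3=[\mathfrak{g}_3,\mathfrak{g}_3]$ --- equals $\mathfrak{g}_3$; thus $\mathfrak{g}_3\subseteq\langle\mathfrak{p}\rangle$, and then a vector of $\mathfrak{p}$ with nonzero $E_4$-component (which exists by condition 1) recovers $E_4$, giving $\langle\mathfrak{p}\rangle=\mathfrak{g}$. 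Existence of a generating $\mathfrak{p}$ follows immediately.

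Then I would set up the normal form. As $p|_{\mathfrak{p}}$ is injective, $\mathfrak{p}$ is the graph of a nonzero functional $\psi\in\mathfrak{p}_1^{*}$, namely $\mathfrak{p}=\{x+\psi(x)E_4:x\in\mathfrak{p}_1\}$, and $\mathfrak{p}\cap\mathfrak{g}_3=\ker\psi$ is a line $\mathbb{R}u$ with $u\in\mathfrak{p}_1$. Splitting on whether $u$ is $k$-isotropic gives the two cases: if $u$ is non-isotropic I take $e_1:=u$ and adjust $e_2$ by multiples of $e_1$ so that $k(e_1,p(e_2))=0$ (solvable since $k(e_1,e_1)\neq0$); if $u$ is isotropic, then $\mathfrak{p}_1$ is Lorentzian, $u$ lies on one null line, I take $e_2:=u$ and let $e_1$ lift the second null direction, so that both $p(e_1),e_2$ are isotropic. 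In each case $e_1,e_2$ are determined up to scalars because $\mathfrak{p}\cap\mathfrak{g}_3$ and the orthogonality/isotropy conditions pin down the two lines in $\mathfrak{p}_1$. To match Lemma \ref{base} I would use the triple-product identity $[X,[Y,Z]]=\mu\,(k(X,Z)Y-k(X,Y)Z)$, valid in any three-dimensional simple Lie algebra with a nonzero constant $\mu$: writing $f_i=p(e_i)$ and $e_3=[f_1,f_2]$, direct substitution gives $[e_2,e_3]=\mu\,k(f_2,f_2)\,e_1$ in the first case (with $k(f_2,f_2)\neq0$) and $[e_2,e_3]=-\mu\,k(f_2,f_1)\,e_2$ in the second (with $k(f_1,f_2)\neq0$), yielding exactly $C^2_{23}=C^3_{23}=0,\ C^1_{23}\neq0$ and $C^1_{23}=C^3_{23}=0,\ C^2_{23}\neq0$ respectively.

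Finally, the equivalence-class count is the crux. Using the Agrachev--Barilari description quoted in the introduction, $\Aut(\mathfrak{g}_3)$ has a single orbit of non-degenerate planes for $\mathfrak{g}_{3,7}=\mathfrak{so}(3,\mathbb{R})$ and two orbits for $\mathfrak{g}_{3,6}=\mathfrak{sl}(2,\mathbb{R})$ --- the $k$-definite planes and the Lorentzian planes. Fixing such a plane $\mathfrak{p}_1$, the remaining freedom is the functional $\psi$ up to its stabilizer in $\Aut(\mathfrak{g}_3)$ and the scaling $\lambda\in\mathbb{R}^{*}$ coming from the $E_4$-factor; identifying $\psi$ with its $k$-dual vector $w\in\mathfrak{p}_1$, the only invariant left is the sign of $k(w,w)$. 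For $\mathfrak{so}(3,\mathbb{R})$ the stabilizer acts as $O(2)$ on the definite plane, so all nonzero $w$ are equivalent: one class. For $\mathfrak{sl}(2,\mathbb{R})$ the definite plane (stabilizer $O(2)$) gives one class, while on the Lorentzian plane the stabilizer acts as $O(1,1)$ and, together with $\mathbb{R}^{*}$, produces exactly three classes according as $w$ is spacelike, timelike, or null: four classes in all. I expect this orbit bookkeeping --- verifying that the stabilizers act as the full $O(2)$, resp.\ $O(1,1)$, and that no further coincidences occur --- to be the main obstacle, together with confirming that the isotropic/non-isotropic dichotomy of $u$ matches the two normal-form cases (null $w$ corresponding to isotropic $u$, hence to the second case).
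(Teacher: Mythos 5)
Your proposal is correct and arrives at exactly the paper's four-case classification, but several steps are carried out by genuinely different means. The skeleton is the same in both arguments: reduce everything to the $\Aut(\mathfrak{g}_3)$-geometry of the Killing form, classify the plane $\mathfrak{p}_1$ by the signature of $k|_{\mathfrak{p}_1}$ (one type for $\mathfrak{so}(3,\mathbb{R})$, definite versus Lorentzian for $\mathfrak{sl}(2,\mathbb{R})$), and then by the causal type of the line $\mathfrak{p}\cap\mathfrak{g}_3$ --- your ``$w$ spacelike/timelike/null'' is the paper's subcases II a), b), c) after passing to the $k$-orthocomplement inside $\mathfrak{p}_1$. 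The tools differ. The paper works with explicit orbit representatives: it describes the orbits of $\SO(3)$ and $\SO_0(2,1)$ on $\mathfrak{g}_3$ and on $\Gr(2,3)$, picks concrete $b$-orthonormal or null bases of each type, computes $e_3$, $e_4$, $[e_2,e_3]$ directly from Table 1, and gets equivalence of subspaces of one type from the simply transitive action on ordered pairs of that type. You instead (i) derive ``$\mathfrak{p}_1$ generates $\mathfrak{g}_3$ iff $k|_{\mathfrak{p}_1}$ is non-degenerate'' from the invariance identity $k([X,Y],X)=0$; (ii) prove sufficiency of 1)--3) abstractly via surjectivity of $p$ onto the simple quotient rather than by exhibiting a linearly independent quadruple; (iii) encode $\mathfrak{p}$ as the graph of a functional $\psi$ on $\mathfrak{p}_1$ and count classes by stabilizer bookkeeping; and (iv) obtain all structure constants at once from the triple-product identity $[X,[Y,Z]]=\mu\left(k(X,Z)Y-k(X,Y)Z\right)$ instead of four separate bracket computations. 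This buys uniformity and makes transparent why $C^1_{23}\neq 0$, $C^2_{23}=C^3_{23}=0$ in the non-isotropic case and $C^2_{23}\neq 0$, $C^1_{23}=C^3_{23}=0$ in the isotropic one, while the paper's route buys concrete representatives that are reused later. Three small debts in your version remain, all routine: that $[X,Y]\neq 0$ for linearly independent $X,Y$ in a three-dimensional simple algebra (no two-dimensional abelian subalgebras; check from Table 1 or from one-dimensionality of centralizers); that the triple-product identity holds with $\mu\neq 0$ (verify on the basis of Table 1, or invoke irreducibility of the adjoint representation); and that the stabilizers of a definite, respectively Lorentzian, plane in $\Aut(\mathfrak{g}_3)=\SO(3)$, respectively $\SO(2,1)$, act as the full $O(2)$, respectively $O(1,1)$, which together with the $\mathbb{R}^{*}$-scaling of $E_4$ gives exactly the counts $1$ and $1+3=4$.
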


\begin{proof}
According to Table \ref{Tab:list}, 
$$\mathfrak{g}_3=\mathfrak{g}_{3,7}=\mathfrak{su}(2)=\mathfrak{so}(3)\quad\mbox{or}\quad \mathfrak{g}_3=\mathfrak{g}_{3,6}=\mathfrak{sl}(2,\mathbb{R}).$$
By the Cartan criterion, a Lie algebra $\mathfrak{g}$ is semisimple if and only if its {\it Killing form}
$$k_{\mathfrak{g}}(X,Y)=k(X,Y):={\rm tr}({\rm ad} X {\rm ad} Y),\quad X,Y\in\mathfrak{g},$$
is non-degenerate on $\mathfrak{g}.$ In addition, $k$ is invariant under the automorphism group ${\rm Aut}(\mathfrak{g})$ of 
 algebra  $\mathfrak{g},$ the connected component of identity ${\rm Aut}_e(\mathfrak{g})$ of the group ${\rm Aut}(\mathfrak{g})$ coincides with ${\rm Ad} G$ 
in case of connected Lie group $G$ with the Lie algebra $\mathfrak{g}$, hence the Lie algebra $L({\rm Aut}(\mathfrak{g}))$ is equal to ${\rm ad}\mathfrak{g}$ \cite{J}, \cite{VGO}. 

In our case, symmetric form $b:=-\frac{1}{2}k$ is more usual. Then in consequence of Table \ref{Tab:list}, vectors
$E_1,$ $E_2,$ $E_3$ are mutually orthogonal with respect to $b,$ moreover $b(E_i,E_i)=1,$ $i=1,2,3$, for $\mathfrak{g}_{3,7}$ and $b(E_1,E_1)=b(E_2,E_2)=-b(E_3,E_3)=1$ for $\mathfrak{g}_{3,6}.$ On the ground of what has been said, 
${\rm Aut}_e(\mathfrak{so}(3))={\rm SO}(3),$ ${\rm Aut}_e(\mathfrak{sl}(2,\mathbb{R}))={\rm SO}_0(2,1).$ The last Lie group is  so-called {\it restricted Lorentz group}, its two-sheeted covering is isomorphic to the Lie group ${\rm SL}(2,\mathbb{R})$ \cite{BerZub1}.
In addition, ${\rm Aut}(\mathfrak{so}(3))={\rm SO}(3),$
${\rm Aut}(\mathfrak{sl}(2,\mathbb{R}))={\rm SO}(2,1);$ the last group has two connected components, one of which is the group
${\rm SO}_0(2,1)$ (see Table 6 in \cite{BR}).  

Spheres $S^2(r)=\{X\in\mathfrak{so}(3):b(X,X)=r^2\},$ $r\geq 0,$ are action orbits of the group ${\rm SO}(3)$ on $\mathfrak{so}(3).$ 
As a corollary, ${\rm SO}(3)$ acts transitively on two-dimensional manifold of two-dimensional subspaces in $\mathfrak{so}(3)$ with  corresponding orientation, $b$-orthogonal to vectors  $X\in S^2(1),$ two-sheeted covering of the Grassmann manifold ${\rm Gr}(2,3)$ of two-dimensional subspaces. In addition, every such subspace generates  $\mathfrak{so}(3).$ In fact, by Table \ref{Tab:list}, 
the bracket $[X,Y]$ is the vector product for Euclidean space $(\mathfrak{so}(3),b)$; if $(X,Y)$ is an orthonormal basis of any two-dimensional subspace
$\mathfrak{p}\subset\mathfrak{so}(3),$ then $(X,Y,Z=[X,Y])$ is an orthonormal basis in $(\mathfrak{so}(3),b),$ moreover
\begin{equation}
\label{eucl}
[X,Y]=Z,\quad [Z,X]=Y,\quad [Y,Z]=X.
\end{equation}
In addition, the group ${\rm SO}(3)$ acts simply transitively on the manifold of ordered orthonormal pairs of vectors
$(X,Y)$ in $(\mathfrak{so}(3),b).$

The action orbits of the group ${\rm SO}_0(2,1)$ on $\mathfrak{sl}(2,\mathbb{R})$ are hyperboloids of one sheet
$S^2(r)=\{X:b(X,X)=r^2\},$ $r>0,$ consisting of {\it spacelike} vectors, the sheets in hyperboloids of two sheets
$S^2(-r)=\{X:b(X,X)=-r^2\},$ $r>0,$ consisting of {\it timelike} vectors, zero $\{0\},$
two connected components in $S^2(0)-\{0\}$ in the cone $S^2(0)=\{X: b(X,X)=0\}$ of {\it isotropic} vectors.

As a consequence, the Grassmann two-dimensional manifold ${\rm Gr}(2,3)$ of all two-dimensional subspaces $\mathfrak{p}\subset\mathfrak{sl}(2,\mathbb{R})$ splits into three ${\rm SO}_0(2,1)$-orbits: I) ${\rm Gr}^+(2,3),$ where the 
form $k$ is non-degenerate and sign-definite ($b$ is positive definite), every its element is $b$-orthogonal to exactly one vector from the connected component of $S^2(-1)$; II) ${\rm Gr}^-(2,3),$ where forms $k$ and $b$ are non-degenerate and 
non-sign-definite, every its element is $b$-orthogonal to exactly one vector from $S^2(1)$; III) ${\rm Gr}^0(2,3),$ where 
forms $k$ and $b$ are degenerate, its elements touch the cone $S^2(0)$. 
In addition, ${\rm Gr}^0(2,3)$ is a closed one-dimensional submanifold in ${\rm Gr}(2,3),$ dividing  ${\rm Gr}(2,3)$ into  open submanifolds ${\rm Gr}^+(2,3),$ ${\rm Gr}^-(2,3).$

For each element in an orbit of the form I)--III), we can choose  $b$-orthogonal basis of one among the types
I) $(X,Y)$, II) $(X,Z)$ or $(Z,X),$ III) $(X,W),$ where 
\begin{equation}
\label{hyp0}
b(X,X)=b(Y,Y)=-b(Z,Z)=1,\quad b(X,Y)=b(X,Z)=0;  
\end{equation}
\begin{equation}
\label{hypo}
b(X,W)=b(W,W)=0,\quad W\neq 0.  
\end{equation}
The group ${\rm SO}(2,1)$ acts simply transitively on manifolds of ordered vector pairs of any among four
mentioned types $(X,Y),$ $(X,Z),$ $(Z,X)$ or $(X,W)$ from (\ref{hyp0}), (\ref{hypo}).

We state that each element of the first two orbits generates $\mathfrak{sl}(2,\mathbb{R}),$ while each element from
${\rm Gr}^0(2,3)$ is a subalgebra in $\mathfrak{sl}(2,\mathbb{R}).$ In consequence of what has been said above, to prove
the last statement it is enough to consider two-dimensional subspaces with bases of a special form.
  
I) Let $(X=E_1,Y=E_2)$ be a $b$-orthogonal basis of two-dimensional subspace $\mathfrak{p}.$ Then according to Table
\ref{Tab:list}, 
\begin{equation}
\label{hyp}
Z:=[X,Y],\quad [Z,X]= -Y,\quad [Y,Z]= -X,\quad b(Y,Z)=0,    
\end{equation}
with fulfillment of all equalities from (\ref{hyp0}).

II) Defining $Z$ by the first formula from (\ref{hyp}) for vectors $X,Y$ in I), we get $Z=-E_3$ and basises of the form $(X,Z)$ or $(Z,X)$ in two-dimensional subspace $\mathfrak{p}$ with conditions from (\ref{hyp0}) and formulas (\ref{hyp}).

III) Let $(X=E_1,W=E_2+E_3)$ be a $b$-orthogonal basis of two-dimensional subspace $\mathfrak{p}.$ Then according to Table
\ref{Tab:list},
\begin{equation}
\label{hyp1}
Z:=[X,W]= - W,\quad b(X,X)=1,\quad b(W,W)=0.  
\end{equation}

Let us suppose that the subspace $\mathfrak{p}\subset\mathfrak{g}$ generates $\mathfrak{g}$. 

Let us prove that conditions 1)---3) are satisfied. 

If the condition 1) is not satisfied, then subalgebra generated by $\mathfrak{p}$ is contained in $\mathfrak{g}_3.$
If the condition 2) is not satisfied, then the subspace $\mathfrak{p}$ is a two-dimensional commutative subalgebra in $\mathfrak{g}.$ In case, when $\mathfrak{g}=\mathfrak{g}_{3,7}\oplus\mathfrak{g}_1$, the condition 3) is a consequence of the condition 2). If
$\mathfrak{g}=\mathfrak{g}_{3,6}\oplus\mathfrak{g}_1$ and the condition 3) is not satisfied, then due to conditions 1) and 2), 
as was shown, $\mathfrak{p}_1$ is a two-dimensional subalgebra in $\mathfrak{g}_3$ and therefore  $\mathfrak{p}$ generates a three-dimensional subalgebra in $\mathfrak{g}.$

Let all conditions 1)---3) be satisfied.

Then I) $\mathfrak{p}_1\in{\rm Gr}^+(2,3)$ or II) $\mathfrak{p}_1\in{\rm Gr}^-(2,3)$ for $\mathfrak{g}=\mathfrak{g}_{3,6}\oplus\mathfrak{g}_1.$

I) The space $\mathfrak{s}=\mathfrak{p}\cap\mathfrak{g}_3=\mathfrak{p}\cap\mathfrak{p}_1$ is one-dimensional and its nonzero 
vectors are $b$-spacelike. 
Let us choose any $b$-orthonormal vector $e_1\in\mathfrak{s}.$ It is easy to see that there exists a unique vector $e_2\in\mathfrak{p}$, up to multiplication by $\pm 1$, such that $(e_1,p(e_2)=Y)$ is a $b$-orthonormal basis in $\mathfrak{p}_1.$ It is clear that $e_2\notin\mathfrak{g}_3.$ 

II) Three cases are possible: a) nonzero vectors from $\mathfrak{s}$ are $b$-spacelike; b) nonzero vectors from $\mathfrak{s}$ $b$-timelike; с) the space $\mathfrak{s}$ consists of $b$-isotropic vectors.

Arguing similarly to case I), we obtain:

a) there exists a basis $(e_1,e_2)$ in $\mathfrak{p}$ such that $e_1=X\in\mathfrak{s},$ $e_2\notin\mathfrak{g}_3$ and $(X,Z=p(e_2))$ is a basis in
$\mathfrak{p}_1$ with conditions (\ref{hyp0});

b) there exists a basis $(e_1,e_2)$ in $\mathfrak{p}$ such that $e_1=Z\in\mathfrak{s},$ $e_2\notin\mathfrak{g}_3$ and $(Z,X=p(e_2))$
is a basis in $\mathfrak{p}_1$ with conditions (\ref{hyp0});

c) there exists a basis $(e_1,e_2)$ in $\mathfrak{p}$ such that $e_1\notin\mathfrak{g}_3,$ $e_2=W_2\in\mathfrak{s},$ moreover $(W_1=p(e_1),W_2=e_2)$ is a basis of isotropic vectors in $\mathfrak{p}_1,$ transformed by some element of the group $\xi\in SO(2,1)$ to the pair $(E_3+E_1,E_3-E_1)$.

Every element $\xi$ of the automorphism group of the Lie algebra $\mathfrak{g}$ transforms the subalgebra $\mathfrak{g}_{3}$ into itself isomorphically. Consequently, for $\mathfrak{g}=\mathfrak{g}_{3,6}\oplus\mathfrak{g}_1$ it saves two types I), II) of  subspaces $\mathfrak{p}_1$ and three types a), b), c) of subspaces $\mathfrak{s}\subset\mathfrak{p}_1$ for type II) and there exist at least four non-equivalent types of subspaces $\mathfrak{p}\subset\mathfrak{g}.$ 

Now, to complete the proof of Theorem \ref{simple}, it remains to show that all constructed basises $(e_1,e_2)$ 
have properties indicated in Theorem \ref{simple}, and such basises for different subspaces $\mathfrak{p}$
of one and the same among four indicated types are transformed into each other by some automorphism of the Lie algebra $\mathfrak{g}.$

I) Let $\mathfrak{g}_3=\mathfrak{g}_{3,7}.$ Then in consequence of (\ref{eucl}), 
$$e_3=[e_1,e_2]=[X,Y]=Z,\,\,e_4=[e_1,e_3]=[X,Z]=-Y,\,\,[e_2,e_3]=[Y,Z]=X=e_1$$
subject to equalities (\ref{hyp0}) with replacement $-b(Z,Z)$ by $b(Z,Z)$.

Let $\mathfrak{g}_3=\mathfrak{g}_{3,6}.$ Then in consequence of (\ref{hyp}), 
$$e_3=[e_1,e_2]=[X,Y]=Z,\,\,e_4=[e_1,e_3]=[X,Z]=-Y,\,\,[e_2,e_3]=[Y,Z]=-X=-e_1$$
with fulfillment of equalities (\ref{hyp0}).

II) a) In consequence of (\ref{hyp}), 
$$e_3=[e_1,e_2]=[X,Z]=-Y,\,\,e_4=[e_1,e_3]=[X,-Y]=-Z,\,\,[e_2,e_3]=[Z,-Y]=-X=-e_1$$
with fulfillment of (\ref{hyp0}).

b) In consequence of (\ref{hyp}),
$$e_3=[e_1,e_2]=[Z,X]=Y,\quad e_4=[e_1,e_3]=[Z,Y]=Y,\quad [e_2,e_3]=[X,Y]=Z=e_1$$
with fulfillment of equalities ({hyp0}).

c) $$e_3=[e_1,e_2]=[F(E_3+E_1),F(E_3-E_1)]=F(-2E_2),$$
$$e_4=F[E_3+E_1,-2E_2]=2F(E_3+E_1),$$ 
$$[e_2,e_3]=[F(E_3-E_1),-2F(E_2)]=-2F(E_3-E_1)=-2e_2.$$

It is clear from the above that for indicated bases $(e_1,e_2)$ of different subspaces $\mathfrak{p}$ of  
the same type among mentioned four, basises $(p(e_1),p(e_2))$ are translated into each other by some automorphism $\xi$ of the algebra $\mathfrak{g}_3.$ If at the same time $e_i\neq p(e_i)$ for $i=2$ or $i=1,$ then $e_i-p(e_i)\in \mathfrak{g}_1$ and $\xi(e_i)$ is defined by formula $\xi(e_i)=\xi(p(e_i))+\xi(e_i-p(e_i)),$ where $\xi(e_i-p(e_i))$ is a corresponding vector from $\mathfrak{g}_1.$
\end{proof}

\begin{proposition}
\label{solv41}
All real Lie algebras $\mathfrak{g}=\mathfrak{g}_4$ with three-dimensional commutative ideal $\mathfrak{I}$, except  $\mathfrak{g}^{1}_{4,2}$, $\mathfrak{g}^{\alpha,1}_{4,5}$,  $-1\leq\alpha\leq 1$, $\alpha\neq 0$, $\mathfrak{g}^{\alpha,\alpha}_{4,5}$, $-1<\alpha<1$, $\alpha\neq 0$, have two-dimensional generating subspaces.
A two-dimensional subspace $\mathfrak{p}\subset \mathfrak{g}$ generates $\mathfrak{g}$ if and only if it does not belong to $\mathfrak{I}$ and intersects by zero with any ideal $\mathfrak{J}\subset\mathfrak{I}$ of $\mathfrak{g}$, where $\mathfrak{J}\neq\mathfrak{I}$. Moreover, for any basis $(e_1,e_2)$ in $\mathfrak{p}$ with $e_2\in\mathfrak{p}\cap\mathfrak{I}$,  
statements of Lemma \ref{base} are true and $[e_2,e_3]=0.$ Any two two-dimensional generating subspaces of the Lie algebra $\mathfrak{g}$ are translated into each other by some its automorphism.
\end{proposition}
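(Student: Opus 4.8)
The plan is to exploit the common structure of the algebras $\mathfrak{g}_{4,2}$--$\mathfrak{g}_{4,6}$: in each the ideal $\mathfrak{I}=\langle E_1,E_2,E_3\rangle$ is commutative, $\mathfrak{g}=\mathfrak{I}\oplus\mathbb{R}E_4$ as a vector space, and since every nonzero commutator in Table \ref{Tab:list} has the form $[E_i,E_4]\in\mathfrak{I}$, the entire bracket is encoded by the single operator $A:=\ad E_4|_{\mathfrak{I}}\colon\mathfrak{I}\to\mathfrak{I}$. First I would record the elementary dimension count $\dim(\mathfrak{p}\cap\mathfrak{I})\geq\dim\mathfrak{p}+\dim\mathfrak{I}-\dim\mathfrak{g}=1$, so that for any plane $\mathfrak{p}\not\subset\mathfrak{I}$ the intersection $\mathfrak{p}\cap\mathfrak{I}$ is exactly one-dimensional. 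Hence one may always choose a basis $(e_1,e_2)$ of $\mathfrak{p}$ with $e_2$ spanning $\mathfrak{p}\cap\mathfrak{I}$ and, after scaling, $e_1=E_4+v$ for some $v\in\mathfrak{I}$.

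The core computation is the shape of the generated subalgebra. Setting $e_3:=[e_1,e_2]$ and $e_4:=[e_1,e_3]$, commutativity of $\mathfrak{I}$ gives $e_3=-Ae_2\in\mathfrak{I}$, $e_4=A^2e_2\in\mathfrak{I}$ and $[e_2,e_3]=0$; in particular $C^4_{23}=0$, so the relations of Lemma \ref{base} hold automatically for this basis. Iterating $[e_1,\cdot]$ only reproduces the vectors $A^ke_2$, whence the subalgebra generated by $\mathfrak{p}$ equals $\mathbb{R}e_1\oplus V$, where $V=\mathrm{span}\{A^ke_2:k\geq0\}$ is the $A$-cyclic subspace of $e_2$ (note $e_1\notin\mathfrak{I}\supseteq V$). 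Therefore $\mathfrak{p}$ generates $\mathfrak{g}$ if and only if $V=\mathfrak{I}$, i.e. if and only if $e_2$ is a cyclic vector of $A$.

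Next I would translate this into the ideal-theoretic criterion of the statement. A subspace $\mathfrak{J}\subseteq\mathfrak{I}$ is a $\mathfrak{g}$-ideal exactly when it is $A$-invariant (the only nontrivial bracket to test is $[E_4,\mathfrak{J}]=-A\mathfrak{J}$), and $e_2$ is cyclic exactly when it lies in no proper $A$-invariant subspace, i.e. when $\mathfrak{p}\cap\mathfrak{J}=\mathbb{R}e_2\cap\mathfrak{J}=0$ for every ideal $\mathfrak{J}\subset\mathfrak{I}$, $\mathfrak{J}\neq\mathfrak{I}$; together with $\mathfrak{p}\not\subset\mathfrak{I}$ this is precisely the claimed characterization. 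Consequently $\mathfrak{g}$ admits a generating plane iff $A$ has a cyclic vector, iff $A$ is non-derogatory (minimal polynomial equal to characteristic polynomial, one Jordan block per eigenvalue). The existence part then reduces to inspecting $A$ for each algebra in Table \ref{Tab:list}: non-derogatory for $\mathfrak{g}^{\alpha}_{4,2}$ ($\alpha\neq1$), $\mathfrak{g}_{4,3}$, $\mathfrak{g}_{4,4}$, every $\mathfrak{g}^{\alpha,\beta}_{4,6}$ (real eigenvalue $\alpha$ plus a genuine complex pair $\beta\pm i$), and $\mathfrak{g}^{\alpha,\beta}_{4,5}$ with $1,\beta,\alpha$ pairwise distinct; while the spectra $\mathrm{diag}(1)\oplus\begin{pmatrix}1&1\\0&1\end{pmatrix}$ for $\mathfrak{g}^{1}_{4,2}$, $\mathrm{diag}(1,1,\alpha)$ for $\mathfrak{g}^{\alpha,1}_{4,5}$ and $\mathrm{diag}(1,\alpha,\alpha)$ for $\mathfrak{g}^{\alpha,\alpha}_{4,5}$ are derogatory, giving exactly the listed exceptions.

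For the final transitivity statement I would build the automorphisms directly rather than read off Tables \ref{Tab:auto}--\ref{Tab:auto2}. Given two generating planes with data $(e_2,v)$ and $(e_2',v')$, both $e_2,e_2'$ cyclic, define $P\in\GL(\mathfrak{I})$ by $P(A^ke_2)=A^ke_2'$ for $k=0,1,2$ (both are bases since the vectors are cyclic); using the characteristic-polynomial relation for $A^3e_2$ one verifies $PA=AP$, so the map $\hat\phi$ fixing $E_4$ and restricting to $P$ on $\mathfrak{I}$ preserves every bracket, is an automorphism, and sends $\mathbb{R}e_2$ onto $\mathbb{R}e_2'$. Composing with $\psi$ given by $\psi|_{\mathfrak{I}}=\mathrm{id}$ and $\psi(E_4)=E_4+u$ (an automorphism for every $u\in\mathfrak{I}$, since $[X,u]=0$ for $X\in\mathfrak{I}$), one picks $u$ to absorb the leftover discrepancy between $Pv$ and $v'$ modulo $\mathbb{R}e_2'$ in the $E_4$-component, so that $\psi\circ\hat\phi$ carries $\mathfrak{p}$ onto $\mathfrak{p}'$. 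Since this construction is uniform across all cases, the one genuinely delicate step of the whole argument is the spectral case analysis behind the list of exceptions; I expect enumerating the coincidences among $1,\alpha,\beta$ for $\mathfrak{g}_{4,5}$ under its parameter constraints to be the fiddliest bookkeeping, with everything else being linear algebra of the single operator $A$.
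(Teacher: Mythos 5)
Your proof is correct, but it takes a genuinely different and more uniform route than the paper's. The paper argues case by case: for each family it exhibits one concrete generating pair (e.g.\ $(E_4,E_3)$, $(E_4,E_1+E_3)$ or $(E_4,E_1+E_2+E_3)$), computes $e_3,e_4$ explicitly, and then reads off from Tables \ref{Tab:auto}--\ref{Tab:auto2} an automorphism carrying that special pair to an arbitrary admissible one. You instead reduce everything to the single operator $A=\ad E_4|_{\mathfrak I}$: generation $\Leftrightarrow$ $e_2$ is $A$-cyclic, ideals inside $\mathfrak I$ $\Leftrightarrow$ $A$-invariant subspaces, existence of a generating plane $\Leftrightarrow$ $A$ non-derogatory, and transitivity via an automorphism built from the commutant of $A$ composed with the shear $E_4\mapsto E_4+u$. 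All the key verifications check out ($PA=AP$ via Cayley--Hamilton, both linear extensions really are automorphisms, and the spectral bookkeeping for $\mathfrak g_{4,5}^{\alpha,\beta}$ under its parameter constraints reproduces exactly the listed exceptions). What your approach buys is a conceptual explanation of \emph{why} the exceptions are $\mathfrak g^1_{4,2}$, $\mathfrak g^{\alpha,1}_{4,5}$, $\mathfrak g^{\alpha,\alpha}_{4,5}$ (precisely the derogatory cases) and independence from the automorphism tables; what the paper's version buys is explicit structure constants ($e_3$, $e_4$, $C^k_{23}$) that are reused downstream. One small completeness remark: the Engel algebra $\mathfrak g_{4,1}$ also has a three-dimensional commutative ideal and is covered by the Proposition (the paper treats it in its case~1), whereas your opening sentence restricts to $\mathfrak g_{4,2}$--$\mathfrak g_{4,6}$ and your enumeration omits it; your framework handles it identically ($A$ is a single nilpotent Jordan block, hence non-derogatory), so you should simply add it to the list.
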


\begin{proof}
It follows from Table \ref{Tab:list} that three-dimensional commutative ideal $\mathfrak{I}$ of the algebra $\mathfrak{g}$ is spanned by vectors $E_1,$ $E_2$, $E_3$. Note that in case  $\mathfrak{g}=\mathfrak{g}^1_{4,2}$ any one-dimensional subspace with a basis vector of the form $xE_1+yE_2$, $x,y\in\mathbb{R}$, is a one-dimensional ideal of this algebra. In case $\mathfrak{g}=\mathfrak{g}^{\alpha,1}_{4,5}$, $-1\leq\alpha<1$, $\alpha\neq 0,$ (respectively, $\mathfrak{g}=\mathfrak{g}^{\alpha,\alpha}_{4,5}$, $-1<\alpha<1$, $\alpha\neq 0$) 
any two-dimensional subspace with basis vectors $E_3$, $xE_1+yE_2$ (respectively, $E_1$, $xE_2+yE_3$), where $x,y\in\mathbb{R}$, is a 
two-dimensional ideal of the algebra $\mathfrak{g}$, contained in $\mathfrak{I}$, and each of these basis vectors generates a one-dimensional ideal of the algebra $\mathfrak{g}$. Any one-dimensional (two-dimensional) subspace in  $\mathfrak{I}\subset\mathfrak{g}^{1,1}_{4,5}$ is a one-dimensional (two-dimensional) ideal of the algebra 
$\mathfrak{g}^{1,1}_{4,5}$.

Let us suppose that the subspace $\mathfrak{p}\subset\mathfrak{g}$ generates $\mathfrak{g}$ and $(e,f)$ is some basis for
$\mathfrak{p}.$ It is clear that at least one of vectors $e,\,f$ does not belong to $\mathfrak{I}$. Since any vector collinear to one basis vector can be added to onether vector of the basis, we can assume that $e\notin\mathfrak{I}$ and $f\in\mathfrak{I}$. 
Moreover, $f$ belongs to no ideal $\mathfrak{J}\subset\mathfrak{I}$ of the algebra $\mathfrak{g}$, different from $\mathfrak{I}$;
otherwise $\mathfrak{p}$ is a subalgebra or $\mathfrak{p}$ is contained in some three-dimensional subalgebra and thus does not generate $\mathfrak{g}.$ Therefore $\mathfrak{p}\cap\mathfrak{J}=\{0\}$.
From here and previous paragraph it follows that algebras $\mathfrak{g}_{4,2}^1$, $\mathfrak{g}^{\alpha,1}_{4,5}$,  $-1\leq\alpha\leq 1$, $\alpha\neq 0$, and $\mathfrak{g}^{\alpha,\alpha}_{4,5}$, $-1<\alpha<1$, $\alpha\neq 0$, have no two-dimensional generating subspace.

{\bf 1.} Let $\mathfrak{g}=\mathfrak{g}_{4,k}$, $k=1,4$.
At first, consider the case when the basis from Proposition \ref{solv41} has a special form $(e_1,e_2)=(E_4,E_3).$ 
Then Table \ref{Tab:list} implies that 
$$\mathfrak{g}_{4,1}:\quad e_3=[e_1,e_2]=-E_2,\quad e_4=[e_1,e_3]=E_1;$$
$$\mathfrak{g}_{4,4}:\quad e_3=[e_1,e_2]=-E_2-E_3,\quad e_4=[e_1,e_3]=E_1+2E_2+E_3.$$ 
It is easy to see that in all cases $(e_1,e_2,e_3,e_4)$ is a basis of the Lie algebra $\mathfrak{g}$ and $[e_2,e_3]=0,$  so vectors
$e_1,e_2$  satisfy Lemma \ref{base}.

Now it follows from Table \ref{Tab:auto} that for the basis $(e_1,e_2),$ specified in Proposition \ref{solv41}, there exists an 
automorphism $\xi$ of the Lie algebra $\mathfrak{g}$ such that $e_1=\xi(\beta E_4)$, $e_2=\xi(E_3)$, where $\beta\neq 0$.
Consequently, two-dimensional subspace $\mathfrak{p}\subset\mathfrak{g}$ with basis $(e_1,e_2)$ generates the Lie algebra  $\mathfrak{g}$, and for vectors $e_1$, $e_2$ all statements of Lemma \ref{base} are true, then all statements of Proposition \ref{solv41} are true.

{\bf 2.} Let $\mathfrak{g}=\mathfrak{g}_{4,3}$, $\mathfrak{g}=\mathfrak{g}^{\alpha}_{4,2}$,  $\alpha\notin\{0,1\}$, or 
$\mathfrak{g}=\mathfrak{g}^{\alpha}_{4,6}$, $\alpha>0$, $\beta\in\mathbb{R}$.
At first, consider the case when the basis from Proposition \ref{solv41} has a special form $(e_1,e_2)=(E_4,E_1+E_3).$ 
Then it follows from Table \ref{Tab:list} that 
$$\mathfrak{g}^{\alpha}_{4,2}:\quad e_3=[e_1,e_2]=-\alpha E_1-E_2-E_3,\quad e_4=[e_1,e_3]=\alpha^2 E_1+2E_2+E_3;$$
$$\mathfrak{g}_{4,3}:\quad e_3=[e_1,e_2]=-E_1-E_2,\quad e_4=[e_1,e_3]=E_1;$$ 
$$\mathfrak{g}^{\alpha}_{4,6}:\quad e_3=[e_1,e_2]=-\alpha E_1-E_2-\beta E_3,\quad e_4=[e_1,e_3]=\alpha^2 E_1+2\beta E_2+(\beta^2-1)E_3.$$ 
It is easy to see that in all cases $(e_1,e_2,e_3,e_4)$ is a basis of the Lie algebra $\mathfrak{g}$ and $[e_2,e_3]=0,$ so that  vectors $e_1,e_2$ satisfy Lemma \ref{base}.

Now it follows from Table \ref{Tab:auto} that for the basis $(e_1,e_2),$ specified in Proposition \ref{solv41}, there exists an 
automorphism $\xi$ of the Lie algebra  $\mathfrak{g}$ such that $e_1=\xi(\beta E_4)$, $e_2=\xi(E_1+E_3)$, where $\beta\neq 0$.  
Consequently, two-dimensional subspace $\mathfrak{p}\subset\mathfrak{g}$ with basis $(e_1,e_2)$ generates the Lie algebra  $\mathfrak{g}$, and for vectors $e_1$, $e_2$ all statements of Lemma \ref{base} are true, then all statements of Proposition \ref{solv41} are true.

{\bf 3.} Let $\mathfrak{g}=\mathfrak{g}^{\alpha,\beta}_{4,5}$, $-1<\alpha<\beta<1$, $\alpha\beta\neq 0$ or $\alpha=-1$, $0<\beta\leq 1$. At first, consider the case when the basis from Proposition \ref{solv41} has a special form $(e_1,e_2)=(E_4,E_1+E_2+E_3).$ 
Then it follows from Table \ref{Tab:list} that 
$$e_3=[e_1,e_2]=-E_1-\beta E_2-\alpha E_3,\,\,e_4=[e_1,e_3]=E_1+\beta^2E_2+\alpha^2E_3.$$
It is easy to see that $(e_1,e_2,e_3,e_4)$ is a basis of the Lie algebra $\mathfrak{g}$ and $[e_2,e_3]=0,$ so that vectors
$e_1,e_2$ satisfy Lemma \ref{base}.

Now it follows from Table \ref{Tab:auto} that for the basis $(e_1,e_2),$ specified in Proposition \ref{solv41}, there exists an 
automorphism $\xi$ of the Lie algebra $\mathfrak{g}$ such that $e_1=\xi(\beta E_4)$, $e_2=\xi(E_1+E_2+E_3)$, where $\beta\neq 0$.  
Consequently, two-dimensional subspace $\mathfrak{p}\subset\mathfrak{g}$ with the basis $(e_1,e_2)$ generates the Lie algebra   $\mathfrak{g}$, and for vectors $e_1$, $e_2$ all statements of Lemma \ref{base} are true, then all statements of Proposition \ref{solv41}
are true.  
\end{proof}	

\begin{theorem}
\label{solv49}
Real Lie algebras $\mathfrak{g}$, where $\mathfrak{g}=\mathfrak{g}_{4,7},$  $\mathfrak{g}=\mathfrak{g}^{\alpha}_{4,9}$, $\alpha\geq 0$, $\mathfrak{g}=\mathfrak{g}^{\alpha}_{4,8}$, $-1\leq\alpha\leq 1$, except $\mathfrak{g}^1_{4,8},$ have
two-dimensional generating subspaces. A two-dimensional subspace $\mathfrak{p}\subset \mathfrak{g}$ generates the algebra $\mathfrak{g}$ if and only if it is not contained in three-dimensional ideal $\mathfrak{I}$ and intersects by zero with any
ideal $\mathfrak{J}\subset\mathfrak{I},$ where $\mathfrak{J}\neq\mathfrak{I},$ of the algebra $\mathfrak{g}.$ Any two generating two-dimensional subspaces of the Lie algebra $\mathfrak{g}$ are translated to each other by an automorphism of this algebra.  
In addition, for the basis $(e_1,e_2)$ in $\mathfrak{p}$ with $e_1\in\mathfrak{p}\cap\mathfrak{I}$ are valid Lemma \ref{base}, 
moreover $C^1_{23}\neq 0$, $C^2_{23}=0$ for $\mathfrak{g}_{4,7},$ $\mathfrak{g}^{\alpha}_{4,9},$ 
$\mathfrak{g}^{\alpha}_{4,8},$ $\alpha\neq 0,$ and $C^1_{23}=C^2_{23}=0$ for $\mathfrak{g}^{0}_{4,8}.$   	 
\end{theorem}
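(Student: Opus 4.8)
The plan is to mimic the proof of Proposition \ref{solv41}, of which this theorem is the exact analogue for the case when the three-dimensional maximal nilpotent ideal $\mathfrak{I}$, spanned by $E_1,E_2,E_3$, is the Heisenberg algebra ($[E_2,E_3]=E_1$) rather than commutative. I would first read off from Table \ref{Tab:list} the common structural features: in each of $\mathfrak{g}_{4,7}$, $\mathfrak{g}^{\alpha}_{4,9}$ ($\alpha\geq 0$) and $\mathfrak{g}^{\alpha}_{4,8}$ ($-1\leq\alpha\leq 1$), the ideal $\mathfrak{I}$ is Heisenberg with one-dimensional centre $\mathfrak{z}=\mathbb{R}E_1$, and $\mathfrak{z}$ is an ideal of $\mathfrak{g}$. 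I would then list, for each algebra, all ideals $\mathfrak{J}\subset\mathfrak{I}$ of $\mathfrak{g}$ with $\mathfrak{J}\neq\mathfrak{I}$; these are precisely the subspaces of $\mathfrak{I}$ invariant under $\ad E_4$ and under the inner derivations of $\mathfrak{I}$. This enumeration is what separates the admissible algebras from the excluded $\mathfrak{g}^1_{4,8}$: when $\ad E_4$ acts as a scalar on the span of $E_2,E_3$ (exactly the case $\alpha=1$ for $\mathfrak{g}^{\alpha}_{4,8}$), the span of $E_1$ and any $aE_2+bE_3$ is a two-dimensional ideal, so every one-dimensional subspace of $\mathfrak{I}$ lies in some proper ideal and no two-dimensional $\mathfrak{p}$ can meet $\mathfrak{I}$ admissibly.

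For the criterion I would argue as in Proposition \ref{solv41}. If $\mathfrak{p}$ generates, then $\mathfrak{p}\not\subset\mathfrak{I}$ (otherwise the generated subalgebra lies in $\mathfrak{I}$), so $\mathfrak{p}\cap\mathfrak{I}$ is one-dimensional; absorbing a collinear vector into the other basis vector, I may take a basis $(e_1,e_2)$ of $\mathfrak{p}$ with $e_1\in\mathfrak{p}\cap\mathfrak{I}$ and $e_2\notin\mathfrak{I}$. If $e_1$ belonged to some proper ideal $\mathfrak{J}\subset\mathfrak{I}$, then $\mathfrak{p}$ would be contained in a proper subalgebra and could not generate, which gives the necessity of $\mathfrak{p}\cap\mathfrak{J}=\{0\}$. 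Observe already here that, since $\mathfrak{I}$ is an ideal, $e_3=[e_1,e_2]$ and $e_4=[e_1,e_3]$ lie in $\mathfrak{I}$, hence $\mathfrak{I}$ is spanned by $e_1,e_3,e_4$ and $[e_2,e_3]\in\mathfrak{I}$; as $e_2\notin\mathfrak{I}$, the $e_2$-component of $[e_2,e_3]$ vanishes, so $C^2_{23}=0$ holds simultaneously for all these algebras, and together with $C^4_{23}=0$ from Lemma \ref{base} this leaves $[e_2,e_3]=C^1_{23}e_1+C^3_{23}e_3$.

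To decide whether $C^1_{23}$ vanishes, and to establish that the special subspaces actually generate, I would compute with explicit representatives: $(e_1,e_2)=(E_3,E_4)$ for $\mathfrak{g}_{4,7}$, $(e_1,e_2)=(E_2,E_4)$ for $\mathfrak{g}^{\alpha}_{4,9}$, and the uniform choice $(e_1,e_2)=(E_2+E_3,E_4)$ for all $\mathfrak{g}^{\alpha}_{4,8}$. In each case $e_1$ avoids every proper ideal, and a direct bracket computation shows that $(e_1,e_2,e_3,e_4)$ is a basis satisfying Lemma \ref{base}. The resulting constant is $C^1_{23}=1$ for $\mathfrak{g}_{4,7}$, $C^1_{23}=1+\alpha^2\neq 0$ for $\mathfrak{g}^{\alpha}_{4,9}$, and $C^1_{23}=\alpha$ for $\mathfrak{g}^{\alpha}_{4,8}$; the last formula makes the dichotomy transparent, since it is nonzero exactly when $\alpha\neq 0$ and vanishes precisely for $\mathfrak{g}^{0}_{4,8}$, where $E_3$ commutes with $E_4$ and $[e_2,e_3]$ collapses into the span of $e_3$.

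Finally, I would invoke Tables \ref{Tab:auto}, \ref{Tab:auto2} to prove that any generating $\mathfrak{p}$, normalized to a basis $(e_1,e_2)$ with $e_1\in\mathfrak{p}\cap\mathfrak{I}$ and $e_2\notin\mathfrak{I}$, can be carried by an automorphism onto the special representative above. The diagonal and rotational blocks of the automorphism matrices act transitively on the admissible vectors $e_1\in\mathfrak{I}$ (those avoiding all proper ideals), while the free entries of the last column absorb the $\mathfrak{I}$-component and rescale $e_2$. This transitivity statement does double duty: combined with the explicit computation it shows that every $\mathfrak{p}$ satisfying the criterion generates $\mathfrak{g}$ (hence existence of generating subspaces and the ``if'' direction of the criterion), and it yields the asserted equivalence of any two generating subspaces. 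I expect the non-commutativity of $\mathfrak{I}$ to be the principal obstacle: it enlarges the list of proper ideals to be excluded (and is exactly what rules out $\mathfrak{g}^1_{4,8}$), it forces the case distinction that produces $C^1_{23}=0$ only for $\mathfrak{g}^{0}_{4,8}$, and it makes the verification of transitivity through the automorphism tables the most laborious step.
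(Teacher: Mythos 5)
Your proposal is correct and follows essentially the same route as the paper: enumerate the proper ideals of $\mathfrak{g}$ inside $\mathfrak{I}$ (which is exactly what excludes $\mathfrak{g}^1_{4,8}$), verify the criterion's necessity by the subalgebra argument, compute the structure constants on an explicit representative ($(E_3,E_4)$, resp.\ $(E_2+E_3,E_4)$; the paper uses $(E_3,E_4)$ rather than your $(E_2,E_4)$ for $\mathfrak{g}^{\alpha}_{4,9}$, but both give $C^1_{23}=1+\alpha^2$), and then use Tables \ref{Tab:auto}, \ref{Tab:auto2} to carry an arbitrary normalized basis onto the representative. The only point where the paper is more careful than your sketch is the transitivity step: the relevant column of the automorphism matrix has one constrained entry, so one must first replace $e_2$ by $e_2+ye_1$ for a uniquely determined $y$ before the automorphism exists — a detail, not a different idea, and your a priori observation that $C^2_{23}=0$ because $[e_2,e_3]\in\mathfrak{I}=\mathrm{span}(e_1,e_3,e_4)$ is a mild streamlining of the paper's direct verification.
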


\begin{proof}
It follows from Table \ref{Tab:list} that three-dimensional ideal $\mathfrak{I}$ of the Lie algebra $\mathfrak{g}$ is spanned by vectors $E_1$, $E_2$, $E_3$. In addition, the algebra  $\mathfrak{g}=\mathfrak{g}_{4,7}$ has unique (two-dimensional) ideal $\mathfrak{J}_1=\rm{span}(E_1,E_2)\subset\mathfrak{I}$, the algebra $\mathfrak{g}=\mathfrak{g}^{\alpha}_{4,9}$ has unique  (one-dimensional) ideal $\mathfrak{J}_2\subset\mathfrak{I}$ spanned on vector $E_1$, the algebra $\mathfrak{g}=\mathfrak{g}^{\alpha}_{4,8}$, $\alpha\neq 1$ has two two-dimensional commutative ideals $\mathfrak{J}_3=\rm{span}(E_1,E_3)\subset\mathfrak{I}$, $\mathfrak{J}_4=\rm{span}(E_2,E_3)\subset\mathfrak{I}$ and one
one-dimensional ideal $\mathfrak{J}_2\subset\mathfrak{I}$ spanned on vector $E_1$. Note also that every two-dimensional subspace with basis vectors $E_1$, $xE_2+yE_3$, where $x,y\in\mathbb{R}$, is a two-dimensional ideal of the algebra $\mathfrak{g}^1_{4,8}$.
	
Assume that a subspace $\mathfrak{p}\subset\mathfrak{g}$ generates the algebra $\mathfrak{g}$ and $(e,f)$ is a basis for 
$\mathfrak{p}.$ It is clear that at least one of vectors $e,\,f$ does not belong to $\mathfrak{I}$; otherwise the subspace $\mathfrak{p}\subset\mathfrak{I}$ does not generate $\mathfrak{g}$. Since to a vector of the basis one can add arbitrary 
vector wich is collinear to another one, then we can suppose that $e\in\mathfrak{I}$ and $f\notin\mathfrak{I}$, and also the
component of $f$ for $E_4$ in the basis $(E_1,E_2,E_3,E_4)$ is equal to 1. If $e$ belongs to some two-dimensional or one-dimensional subideal of $\mathfrak{I}$, then $\mathfrak{p}$ is a subalgebra or is contained in a three-dimensional subalgebra and therefore does not generate $\mathfrak{g}$. Then $\mathfrak{p}\cap\mathfrak{J}=\{0\}$. This and the previous paragraph imply that the algebra
$\mathfrak{g}^1_{4,8}$ has no two-dimensional generating subspace.
	
{\bf 1.} Assume that $\mathfrak{g}=\mathfrak{g}_{4,7}$. At first consider the special basis $(e_1,e_2)=(E_3,E_4).$ 
Then the Table \ref{Tab:list} implies that 
$$e_3=[e_1,e_2]=[E_3,E_4]=E_2+E_3,\quad e_4=[e_1,e_3]=-E_1,\quad [e_2,e_3]=e_1-2e_3.$$
It is easy to see that $(e_1,e_2,e_3,e_4)$ is a basis of the algebra $\mathfrak{g}$ and $C_{23}^1\neq 0$, 
$C_{23}^2=C^4_{23}=0,$ so the last statement of Theorem \ref{solv49} is true for the basis $(e_1,e_2)$. 
	
Since $e\notin\mathfrak{J}_2$ then the component $z$ of $e$ for $E_3$ is not zero. 
Let us set $e_1=e/z,e_2=f$. The conditions for $e_1,$ $e_2$ imply that they have a form 
$$e_1=xE_1+a_2E_2+E_3,\quad e_2=a_5E_1+a_4E_2+a_3E_3+E_4.$$ 
Let us show that for some change of the vector $e_2$ by $e_2^{\prime}=e_2+ye_1$, $y\in\mathbb{R}$ there exists an automorphism
$\xi$ of the algebra $\mathfrak{g}$ such that $\xi(E_3)=e_1,$ $\xi(E_4)=e^{\prime}_2$. According to Table \ref{Tab:auto}, 
the corresponding components for the vector $e_2^{\prime}$ are equal to $a_4^{\prime}=a_4+ya_2$, $a_3^{\prime}=a_3+y$ 
and so it must be
$$x=a^{\prime}_4-(1+a_2)a^{\prime}_3=a_4-(1+a_2)a_3 -y.$$ 
It is clear that there exists exactly one solution $y$ of this equation.
	
This, together with $[e_1,e_2^{\prime}]=[e_1,e_2]$, imply that the last statement of Theorem \ref{solv49} is valid for an appropriate basis $(e_1,e^{\prime}_2)$ of indicated view in the subspace $\mathfrak{p}$ . 
	
{\bf 2.} Assume that $\mathfrak{g}=\mathfrak{g}^{\alpha}_{4,9}$, $\alpha\geq 0$. 
	
Let us consider at first the case of special basis $(e_1,e_2)=(E_3,E_4)$ with properties from Theorem \ref{solv49}. Then it follows from Table \ref{Tab:list} that
$$e_3=[e_1,e_2]=E_2+ \alpha E_3,\quad e_4=[e_1,e_3]=-E_1,\quad [e_2,e_3]=(1+\alpha^2)e_1-2\alpha e_3.$$	
It is easy to see that $(e_1,e_2,e_3,e_4)$ is a basis of the Lie algebra $\mathfrak{g}$ and $C_{23}^1\neq 0$, $C_{23}^2=C^4_{23}=0,$ so the last statement of Theorem \ref{solv49} is true for vectors $e_1,e_2$. 
	
Set $e_1=e,e_2=f$. By conditions on vectors $e_1,e_2$, they have a form 
$$e_1=xE_1+a_2E_2+a_1E_3,\quad e_2=a_3E_1+a_4E_2+a_5E_3+E_4,\quad a_1^2+a_2^2\neq 0.$$
Let us show that under some change of $e_2$ by $e_2^{\prime}=e_2+ye_1$, $y\in\mathbb{R}$, there exists an automorphism $\xi$ 
of the algebra $\mathfrak{g}$ such that $\xi(E_3)=e_1,$ $\xi(E_4)=e^{\prime}_2$. In view of Table \ref{Tab:auto2}, 
the corresponding components of $e_2^{\prime}$ are equal to $a^{\prime}_5=a_5+ya_1,$ $a^{\prime}_4=a_4+ya_2.$ 

It must be
$$x=\frac{1}{1+\alpha^2}[a_1(\alpha a^{\prime}_4-a^{\prime}_5)-a_2(a^{\prime}_4+\alpha a^{\prime}_5)]=
\frac{1}{1+\alpha^2}[a_1(\alpha a_4-a_5)-a_2(a_4+\alpha a_5)-y(a_1^2+a_2^2)].$$
Because $a_1^2+a_2^2>0,$ there exists exactly one solution $y$ of this equation.
	
It follows from here and the equality $[e_1,e_2^{\prime}]=[e_1,e_2]$ that the last statement of Theorem \ref{solv49} is true
for an appropriate basis $(e_1,e^{\prime}_2)$ of the above form in $\mathfrak{p}.$ 	
	
{\bf 3.} Assume that $\mathfrak{g}=\mathfrak{g}^{\alpha}_{4,8}$, $-1\leq\alpha<1$. 
	
Let us consider at first the case when the basis from Theorem \ref{solv49} is of special view: $(e_1,e_2)=(E_2+E_3,E_4)$. 
Then in consequence of Table \ref{Tab:list}, 
$$e_3=[e_1,e_2]=E_2+\alpha E_3,\quad e_4=[e_1,e_3]=(\alpha-1)E_1,\quad [e_2,e_3]=-\alpha e_1-(1+\alpha)e_3.$$
Since $\alpha\neq 1$ then, as it is easy to see, $(e_1,e_2,e_3,e_4)$ is a basis of the Lie algebra $\mathfrak{g}$, $C_{23}^2=C_{23}^4=0$, moreover $C_{23}^1=0$ only for $\alpha=0$, thus the last statement of Theorem \ref{solv49} is valid for
vectors $e_1,e_2$. 
	
Set $e_1=e,e_2=f$. By conditions on the vector $e_1,$ it has a form $e_1=xE_1+a_1E_2+a_2E_3,$ where $a_1a_2\neq 0.$ 
We show that after some change of $e_2$ by $e_2^{\prime}=e_2+ye_1$, $y\in\mathbb{R}$ there exists an automorphism 
$\xi$ of $\mathfrak{g}$ such that $\xi(E_2+E_3)=e_1,$ $\xi(E_4)=e^{\prime}_2$.
	
Let $-1\leq\alpha<1$, $\alpha\neq 0$ and $e_2=a_5E_1+a_4E_2+\alpha a_3E_3+E_4.$ By Table \ref{Tab:auto}, the corresponding
components of $e_2^{\prime}$ are equal to $a_4^{\prime}=a_4+ya_1$, $a_3^{\prime}=a_3+ya_2/\alpha$ and 
$$x=a_2a_4^{\prime}-a_1a^{\prime}_3=a_2a_4-a_1a_3+a_1a_2(1-1/\alpha)y.$$
Since $a_1a_2\neq 0$, there exists exactly one solution $y$ of this equation.
	
Assume that $\alpha=0$ and $b_3$ is the component of the vector $e_2$ for $E_3.$
Since $a_2\neq 0$ then there exists a unique $y\in\mathbb{R}$ such that $ya_2+b_3=0.$ Then we can suppose that $e^{\prime}_2=e_2+ye_1$ has components as in the last column of the automorphism matrix in Table \ref{Tab:auto}. 
There exists a unique number $a_3$ such that $x=a_3+a_2a_4.$ Then $\xi(E_2+E_3)=e_1,$ $\xi(E_4)=e^{\prime}_2$ 
for an automorphism $\xi$ of the algebra $\mathfrak{g}$ with matrix from Table \ref{Tab:auto}.
Hence, since $[e_1,e_2^{\prime}]=[e_1,e_2]$, the last statement of Theorem \ref{solv49} is valid for an appropriate basis $(e_1,e^{\prime}_2)$ of indicated view in the subspace $\mathfrak{p}$. 
\end{proof}

\begin{proposition}
\label{solv410}
The real Lie algebra $\mathfrak{g}=\mathfrak{g}_{4,10}$ has two-dimensional geberating subspaces. A two-dimensional subspace  $\mathfrak{p}\subset \mathfrak{g}$ generates the algebra $\mathfrak{g}$ if and only if it is contained in no its three-dimensional 
ideal and intersects by zero with (unique) two-dimensional ideal of the algebra $\mathfrak{g}$. Any two generating two-dimensional subspaces of the Lie algebra $\mathfrak{g}$ can be translated to each other by an automorphism of this algebra. In addition, for a
basis $(e_1,e_2,e_3,e_4)$ from Lemma \ref{base}, $C^1_{23}=C^2_{23}=0.$ 	 
\end{proposition}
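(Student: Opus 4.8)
The plan is to follow the pattern of Proposition~\ref{solv41} and Theorem~\ref{solv49}: read off the ideal structure of $\mathfrak{g}_{4,10}$ from Table~\ref{Tab:list}, prove necessity of the two conditions, and then settle sufficiency together with the equivalence claim by fixing one convenient generating plane and carrying every other generating plane onto it by an automorphism from Table~\ref{Tab:auto2}. From Table~\ref{Tab:list} one has $\mathfrak{g}'=[\mathfrak{g},\mathfrak{g}]=\mathrm{span}(E_1,E_2)=:\mathfrak{I}$, a two-dimensional commutative ideal on which $\ad E_3=-\mathrm{Id}$ while $\ad E_4$ is rotation by a right angle; hence $\ad(cE_3+dE_4)|_{\mathfrak{I}}$ has the non-real eigenvalues $-c\pm id$ whenever $d\neq 0$. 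I would first use this to describe all ideals: $\mathfrak{g}$ has no one-dimensional ideal (a line in $\mathfrak{I}$ cannot be $\ad E_4$-invariant, and a line meeting $\mathfrak{g}\setminus\mathfrak{I}$ is not $\ad E_1$-invariant), and since $\mathfrak{g}/\mathfrak{g}'$ is abelian every subspace containing $\mathfrak{g}'$ is an ideal, while every three-dimensional ideal meets $\mathfrak{g}'$ in a subideal of dimension $\geq 1$, hence by the absence of one-dimensional ideals contains $\mathfrak{g}'$. Thus the three-dimensional ideals are exactly $\mathfrak{g}'\oplus\mathrm{span}(cE_3+dE_4)$, and $\mathfrak{I}=\mathfrak{g}'$ is the unique two-dimensional ideal.

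For necessity, if $\mathfrak{p}$ generates $\mathfrak{g}$ it lies in no proper subalgebra, in particular in no three-dimensional ideal. Since every three-dimensional ideal contains $\mathfrak{I}$, a two-dimensional plane lies in one if and only if it meets $\mathfrak{I}$ nontrivially: if $0\neq w\in\mathfrak{p}\cap\mathfrak{I}$ and $e=w'+cE_3+dE_4\in\mathfrak{p}$ with $(c,d)\neq 0$, then $\mathfrak{p}\subset\mathfrak{I}\oplus\mathrm{span}(cE_3+dE_4)$; conversely $\mathfrak{p}\cap\mathfrak{I}=0$ forces $\dim(\mathfrak{p}+\mathfrak{I})=4$, so $\mathfrak{p}$ lies in no three-dimensional ideal. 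Hence both stated conditions hold; in fact they are equivalent here, but I keep both to match the statement.

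For sufficiency and equivalence I would take the explicit plane $\mathfrak{p}_0=\mathrm{span}(e_1,e_2)$ with $e_1=E_4$, $e_2=E_1+E_3$. A direct computation gives $e_3=[e_1,e_2]=E_2$, $e_4=[e_1,e_3]=-E_1$, so $(e_1,e_2,e_3,e_4)$ is a basis, Lemma~\ref{base} holds with $C^4_{23}=0$, and $[e_2,e_3]=[E_1+E_3,E_2]=-E_2=-e_3$, i.e. $C^1_{23}=C^2_{23}=0$, $C^3_{23}=-1$; note that the scaling generator $E_3$ must sit in $e_2$ and the rotation generator $E_4$ in $e_1$, since $\ad E_3|_{\mathfrak{I}}$ is scalar and taking $e_1$ along $E_3$ would make $[e_1,e_3]$ collinear with $e_3$ and violate Lemma~\ref{base}. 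Any plane with $\mathfrak{p}\cap\mathfrak{I}=0$ projects isomorphically onto $\mathfrak{g}/\mathfrak{I}$, hence contains unique vectors $E_3+u$, $E_4+v$ with $u=u_1E_1+u_2E_2$, $v=v_1E_1+v_2E_2$ in $\mathfrak{I}$; applying the automorphism $\phi$ of Table~\ref{Tab:auto2} with $\sigma=1$ gives $\phi(E_1+E_3)=E_3+(a_1+a_3)E_1+(a_4-a_2)E_2$ and $\phi(E_4)=E_4+a_4E_1-a_3E_2$, so $\phi(\mathfrak{p}_0)=\mathrm{span}(E_3+u,E_4+v)$ precisely when $a_3=-v_2$, $a_4=v_1$, $a_1=u_1+v_2$, $a_2=v_1-u_2$. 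Then $\mathfrak{p}$ is equivalent to $\mathfrak{p}_0$, hence generates $\mathfrak{g}$ and inherits $C^1_{23}=C^2_{23}=0$, since automorphisms preserve the bracket relations defining the Lemma~\ref{base} basis.

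The main obstacle is exactly this last step, and it conceals a genuine subtlety. Because $\ad E_3$ is scalar on $\mathfrak{I}$, the condition $\mathfrak{p}\cap\mathfrak{I}=0$ does not by itself force $[e_1,e_2]\neq 0$: the commuting plane $\mathrm{span}(E_3,E_4)$, and more generally every $\mathrm{span}(E_3+u,E_4+v)$ with $[E_3+u,E_4+v]=(u_2-v_1)E_1-(u_1+v_2)E_2=0$, meets $\mathfrak{I}$ only at $0$ yet is an abelian subalgebra. What rescues the argument is that the admissibility condition $a_1^2+a_2^2\neq 0$ for the automorphisms of Table~\ref{Tab:auto2} translates into $(u_1+v_2,\,v_1-u_2)\neq(0,0)$, which is \emph{exactly} the condition $[E_3+u,E_4+v]\neq 0$; thus the automorphism $\phi$ above exists if and only if $\mathfrak{p}$ is not one of these commuting planes. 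This is the point that must be handled with care, and where the honest equivalence is seen to be generation $\iff$ $\mathfrak{p}\cap\mathfrak{I}=0$ together with $[e_1,e_2]\neq 0$.
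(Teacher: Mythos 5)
Your proof follows the same route as the paper's: the same normal form $(e_1,e_2)=(E_4,E_1+E_3)$, the same bracket computations giving $e_3=E_2$, $e_4=-E_1$, $[e_2,e_3]=-e_3$ (hence $C^1_{23}=C^2_{23}=0$), and the same use of the $\mathfrak{g}_{4,10}$ automorphisms from Table \ref{Tab:auto2} to carry an arbitrary admissible plane onto this one. Your determination of the parameters $a_1=u_1+v_2$, $a_2=v_1-u_2$, $a_3=-v_2$, $a_4=v_1$ is correct, as is your identification of \emph{all} three-dimensional ideals as the one-parameter family $\mathfrak{J}\oplus\mathrm{span}(cE_3+dE_4)$ (the paper names only two of them, which is harmless: since every such ideal contains $\mathfrak{J}=\mathrm{span}(E_1,E_2)$, the two stated conditions collapse to the single condition $\mathfrak{p}\cap\mathfrak{J}=\{0\}$, as you observe).

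The ``subtlety'' you flag is real, and it is a defect of the statement being proved rather than of your argument. The planes $\mathrm{span}(E_3+u,E_4+v)$ with $u_2=v_1$, $u_1=-v_2$ are abelian two-dimensional subalgebras that meet $\mathfrak{J}$ only at $0$ and lie in no three-dimensional ideal, so they satisfy both hypotheses of Proposition \ref{solv410} yet do not generate $\mathfrak{g}$; the correct criterion is $\mathfrak{p}\cap\mathfrak{J}=\{0\}$ together with $[\mathfrak{p},\mathfrak{p}]\neq\{0\}$. The paper's own proof never establishes the sufficiency direction: it assumes from the outset that $\mathfrak{p}$ generates, and its justification of the admissibility condition (``$a_1^2+a_2^2\neq 0$; in the opposite case $[e_1,e_2]=0$'') is precisely the step at which generation is used rather than derived. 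Your remark that $a_1^2+a_2^2\neq 0$ is \emph{equivalent} to $[E_3+u,E_4+v]\neq 0$ makes this precise. In short: your proof is correct, matches the paper's method, and correctly repairs the one defective clause of the statement; the remaining claims (uniqueness of the equivalence class of generating planes, and $C^1_{23}=C^2_{23}=0$ for the Lemma basis) are unaffected.
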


\begin{proof}
It follows from Table \ref{Tab:list} that the Lie algebra $\mathfrak{g}=\mathfrak{g}_{4,10}$ has two three-dimensional ideals
$\mathfrak{I}_1=\rm{span}(E_1,E_2,E_4)$ and $\mathfrak{I}_2=\rm{span}(E_1,E_2,E_3)$, moreover
$\mathfrak{J}=\mathfrak{I}_1\cap\mathfrak{I}_2=\rm{span}(E_1,E_2)$ is two-dimensional commutative ideal of the algebra
$\mathfrak{g}$.

Assume that the subspace $\mathfrak{p}\subset\mathfrak{g}$ generates the algebra $\mathfrak{g}$ and $(e,f)$ is a basis in $\mathfrak{p}.$ It is obvious that at least one from vectors $e,f$ does not belong to $\mathfrak{I}_1$; otherwise the 
subspace $\mathfrak{p}\subset\mathfrak{I}_1$ does not generate $\mathfrak{g}$. Since one vector of this basis could be sumed
with arbitrary vector, collinear with another vector of the basis, we can suppose that $f\notin\mathfrak{I}_1$ and $e\in\mathfrak{I}_1$. In addition, $e\notin\mathfrak{J}$; in the opposite case $\mathfrak{p}$ is a part of some three-dimensional subalgebra and so does not generate $\mathfrak{g}$. Without loss of generality we can suppose that the component of $e$ (respectively, $f$) for $E_4$ (respectively, $E_3$) in the basis $(E_1,E_2,E_3,E_4)$ is equal to 1. If $x$ is the component 
of $f$ for $E_4$ then $f^{\prime}=f-xe \in \mathfrak{I}_2$.

At first consider a special case $(e_1,e_2)=(E_4,E_1+E_3)$ of basises from Proposition \ref{solv410}. It follows from Table \ref{Tab:list} that
$$e_3=[e_1,e_2]=E_2,\quad e_4=[e_1,e_3]=-E_1,\quad [e_2,e_3]=-e_3.$$
Since $\alpha\neq 1$, then, as it is easy to see, $(e_1,e_2,e_3,e_4)$ is a basis of the Lie algebra $\mathfrak{g}$ and 
$C_{23}^4=0$, thus Lemma \ref{base} is true for vectors $e_1,e_2$, moreover $C_{23}^1=C_{23}^2=0$.

Set $e_1=e$, $e_2=f^{\prime}$. We can consider that the vector $e_1$ has components as in the last column of an automorphism
matrix in Table \ref{Tab:auto2} and $\sigma=1$. There exists a unique pair of numbers $(a_1,a_2)\in\mathbb{R}^2$ such that  components of $e_2$ are equal to the sum of corresponding components of the first and the third columns of an automorphism matrix in Table \ref{Tab:auto2}. In addition, $a_1^2+a_2^2\neq 0$; in the opposite case $[e_1,e_2]=0$. Therefore there exists an
automorphism $\xi$ of the algebra $\mathfrak{g}$ such that $\xi(E_4)=e_1$, $\xi(E_1+E_3)=e_2$.

Thus, since $[e_1,e_2^{\prime}]=[e_1,e_2],$ then for an appropriate basis $(e_1,e_2)$ of the subspace $\mathfrak{p},$ Lemma \ref{base}
is true, moreover $C^1_{23}=C^2_{23}=0.$
\end{proof}

\section{Main results}
\label{mains} 

In \cite{Ber1} is indicated that parameterized by arc length shortest curves of a left-invariant sub-Finsler metric $d$ on
any connected Lie group $G,$ defined by a left-invariant bracket generating distribution $D$ and a norm $F$ on $D(e),$ coincide 
with solutions to the time-optimal problem for the control system 
\begin{equation}
\label{a3}
\dot{g}(t)=dl_{g(t)}(u(t)),\quad u(t)\in U,
\end{equation}
with measureable control $u=u(t)$. Here $l_g(h)=gh$, the control region is the unit ball $U=\{u\in D(e)\,|F(u)\leq 1\},$
while the bracket generating of the distribution $D$ is equivalent to the condition that the corresponding subspace 
$\mathfrak{p}:= D(e)\subset \mathfrak{g}$
satisfies conditions of Lemma \ref{base}. It is clear that every parameterized by arc length shortest (curve) 
$g(t),\,\,0\leq t\leq a,$ satisfies (\ref{a3}) and $F(u(t))=1$ for almost all $t\in [0,a].$  

In addition, statements on shortest curves are true also for a pair $(D(e),F)$ with a seminorm $F$ such that $F(u)>0$ for 
$u\neq 0,$ defining a left-invariant sub-Finsler quasimetric $d$ on $G$. 

By the Pontryagin maximum principle (PMP) \cite{PBGM}, for time-optimality of a control $u(t)$ and corresponding trajectory 
$g(t),$  $t\in [0,a],$ it is necessary the existence of nowhere zero absolutely continuous covector-function 
$\psi(t)\in T^{\ast}_{g(t)}G$ such that for almost all $t\in [0,a]$ function $\mathcal{H}(g(t);\psi(t);u)=\psi(t)(dl_{g(t)}(u))$
of $u\in U$ attains its maximum at the point $u=u(t)$ 
\begin{equation}
\label{m}
M(t)=\psi(t)(dl_{g(t)}(u(t)))=\max\limits_{u\in U}\psi(t)(dl_{g(t)}(u))
\end{equation}
and an analogue of Hamilton-Jacobi system of ODE is fulfilled. In addition, the function $M(t),$  $t\in [0,a],$ is constant
and nonnegative, $M(t)\equiv M\geq 0$.

By {\it extremal} we shall mean a parameterized curve $g(t)$ in $G$ with maximally admitted connected domain 
$\Omega\subset\mathbb{R}$ which satisfies almost everywhere on maximal subset in $\Omega$ the Pontryagin maximum principle  
and conditions (\ref{a3}), $F(u(t))=1$ with a measureable function $u(t).$ In the case $M=0$ (respectively, $M>0$) an extremal is called {\it abnormal} (respectively, {\it normal}). In the normal case, proportionally changing $\psi=\psi(t)$, $t\in\mathbb{R}$, if it is necessary, one can assume that $M=1$.

\begin{proposition}
\label{prop1}
Every abnormal extremal in $(G,d)$ is one of two one-parameter subgroups 
\begin{equation}
\label{dif}
g(t)=\exp\left(\frac{ste_2}{F(se_2)}\right),\quad t\in \mathbb{R},\,\,s=\pm 1,
\end{equation}
or its left shift on $(G,d)$. 
\end{proposition}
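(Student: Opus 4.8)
The plan is to transfer the Pontryagin maximum principle from $T^\ast G$ to the Lie coalgebra $\mathfrak g^\ast$ and then to exploit the normal form of the structure constants supplied by Lemma \ref{base} and Proposition \ref{const}. By left-invariance (as noted in the introduction) I may assume $g(0)=e$, the general case being a left shift. Writing $p(t):=(dl_{g(t)})^\ast\psi(t)\in\mathfrak g^\ast$ for the reduced costate and $p_i(t):=\langle p(t),e_i\rangle$ in the basis $(e_1,e_2,e_3,e_4)$ of Lemma \ref{base}, the function maximized in (\ref{m}) becomes $\langle p(t),u\rangle$ over $u\in U\subset\mathfrak p=\mathrm{span}(e_1,e_2)$, while the adjoint part of the PMP, after left-trivialization, takes the standard Lie--Poisson form $\dot p_i=\pm\langle p,[u,e_i]\rangle$. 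The admissible control is $u(t)=\alpha(t)e_1+\beta(t)e_2$ with $F(u(t))=1$, so $(\alpha(t),\beta(t))\neq(0,0)$ for almost every $t$.

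First I would characterize abnormality. Since $F$ is positive on $\mathfrak p\setminus\{0\}$, the ball $U$ is a convex body with $0$ in its interior; hence $M=\max_{u\in U}\langle p(t),u\rangle=0$ forces the linear functional $p(t)|_{\mathfrak p}$ to vanish, i.e. $p_1(t)\equiv p_2(t)\equiv 0$. Substituting this into the adjoint equations and using $[e_1,e_2]=e_3$, $[e_2,e_1]=-e_3$ from (\ref{fghl}) gives $\dot p_1=\mp\beta p_3$ and $\dot p_2=\pm\alpha p_3$; as $p_1,p_2$ are identically zero, we obtain $\beta p_3=\alpha p_3=0$ almost everywhere. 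Because $(\alpha,\beta)\neq(0,0)$ wherever $F(u)=1$, this yields $p_3\equiv 0$ (the costate is absolutely continuous, so almost-everywhere vanishing is genuine vanishing).

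Next I would pin down the control. With $p_1=p_2=p_3\equiv0$ and $p$ nowhere zero (PMP), necessarily $p_4(t)\neq0$ for all $t$. Computing $\dot p_3=\pm\langle p,[u,e_3]\rangle$ with $[e_1,e_3]=e_4$, the only component of $[e_2,e_3]$ that could pair nontrivially with $p$ is the $e_4$-component, which vanishes by $C^4_{23}=0$ (Lemma \ref{base}); hence the surviving term is $\dot p_3=\pm\alpha p_4$, and since $p_3\equiv0$, $p_4\neq0$ we get $\alpha\equiv0$. The hypothesis $C^4_{23}=0$ is essential here: without it $\dot p_3$ would couple $\alpha$ and $\beta$ and would not force $\alpha=0$. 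Thus $u(t)=\beta(t)e_2$, and positive homogeneity of $F$ with $F(u)=1$ determines $|\beta|=1/F(se_2)$, where $s=\operatorname{sign}\beta\in\{\pm1\}$, i.e. $u(t)\in\{\,e_2/F(e_2),\,-e_2/F(-e_2)\,\}$.

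Finally, since every admissible value of $u(t)$ is collinear with $e_2$ and such vectors commute, the trajectory satisfies $\dot g=dl_g(\beta e_2)$ and therefore remains in the one-parameter subgroup $\exp(\mathbb R e_2)$; explicitly $g(t)=\exp\!\big(\big(\int_0^t\beta\big)e_2\big)$. The arc-length requirement $F(u)\equiv1$, together with the extremal being taken on its maximal connected domain, forces $\beta$ to keep a constant sign $s$, whence $\beta\equiv s/F(se_2)$ and $g(t)=\exp(ste_2/F(se_2))$, which is (\ref{dif}); the two choices $s=\pm1$ are precisely the two arc-length parametrizations, in opposite directions, of the single subgroup $\exp(\mathbb R e_2)$. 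I expect the one genuinely delicate point to be this last step, namely excluding a control that switches between $\pm e_2/F(\pm e_2)$, so as to guarantee a bona fide one-parameter subgroup rather than a curve retracing its image; the verification that these subgroups are indeed abnormal extremals then follows by direct substitution into the reduced equations, and for $g(0)\neq e$ one obtains the asserted left shift.
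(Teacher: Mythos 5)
Your argument is correct and follows essentially the same route as the paper: left-trivialize the costate to $\mathfrak g^{\ast}$, use the normal form of Lemma \ref{base} and Proposition \ref{const} to deduce $\psi_1=\psi_2\equiv 0\Rightarrow\psi_3\equiv 0\Rightarrow u_1\equiv 0$ (the last step using $C^4_{23}=0$ and $\psi_4\neq 0$), and conclude that $u$ is collinear with $e_2$. The one point you flag as delicate --- excluding a control that switches between $e_2/F(e_2)$ and $-e_2/F(-e_2)$ --- is passed over equally quickly in the paper's own proof, which simply writes $u_2\equiv s/F(se_2)$ with a fixed $s=\pm 1$.
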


\begin{proof}
We can consider (every value of) the covector function $\psi(t)\in T^{\ast}_{g(t)}G$ from PMP as a left-invariant $1$--form on $(G,\cdot)$ and so naturally identify it with a covector function $\psi(t)\in\mathfrak{g}^{\ast}=T^{\ast}_eG$.
	
In \cite{BerZub} for an extremal $g(t)\in G$ are proved the following relations, fulfilled for almost all $t$ in the domain: 
\begin{equation}
\label{difur}
\dot{g}(t)=dl_{g(t)}(u(t)),\,\, (\psi(t)(v))'=\psi(t)([u(t),v]),\,\,u(t), v\in \mathfrak{g},\quad F(u(t))=1. 
\end{equation}
	
Omitting for brevity the parameter $t,$ one can write the last equation in (\ref{difur}) as
$\psi'(v)=\psi([u,v]).$ In particular, for $\psi_i:=\psi(e_i),$ $i=1,2,3,4,$  
\begin{equation}
\label{ss}
\psi_i'= \psi([u,e_i]).
\end{equation} 
Let $u=u_1e_1+u_2e_2\in U$. We get from (\ref{ss}), (\ref{fghl}) and Proposition \ref{const}:
\begin{equation}
\label{psi1}
\psi_1'=\psi(-u_2e_3)=-u_2\psi_3,
\end{equation} 
\begin{equation}
\label{psi2}
\psi_2'=\psi(u_1e_3)=u_1\psi_3,
\end{equation}
\begin{equation}
\label{psi3}
\psi_3'=\psi(u_1e_4+u_2[e_2,e_3])=u_1\psi_4+u_2\sum\limits_{k=1}^3C_{23}^k\psi_k,
\end{equation}
\begin{equation}
\label{psi4}
\psi_4'=\psi(u_1[e_1,e_4]+u_2[e_2,e_4])=u_1\sum\limits_{k=1}^4C_{14}^k\psi_k+u_2\sum\limits_{k=3}^4C_{24}^k\psi_k.
\end{equation}
It is clear that in the abnormal case it must be $\psi_1=\psi_2\equiv 0.$ Then (\ref{psi1}) and (\ref{psi2}) 
imply that $\psi_3\equiv 0$. It follows from $\psi_4\neq 0$, (\ref{psi3}), (\ref{fghl}), and the equality $F(u)=1$ that 
\begin{equation}
\label{u12}
u_1\equiv 0,\quad u_2\equiv \frac{s}{F(se_2)},\quad s=\pm 1.
\end{equation}
It follows from \ref{u12}), Proposition \ref{const}, and (\ref{psi4}) that
\begin{equation}
\label{ ppsi4}
\psi_4(t)=\varphi_4\exp\left(\frac{C_{24}^4st}{F(se_2)}\right)=\varphi_4\exp\left(\frac{C_{23}^3st}{F(se_2)}\right),\,\,s=\pm 1,
\end{equation}
is a solution to the equation (\ref{psi4}) with the initial condition $\psi_4(0)=\varphi_4\neq 0$. It is obvious that it is
possible to find $u(t),$ $\psi(t)$ by the above formulae for all $t\in\mathbb{R}.$

Now Proposition \ref{prop1} is a direct corollary of (\ref{u12}) and the first equation in (\ref{difur}).
\end{proof}

With taking into account Proposition \ref{const} and equalities (\ref{u12}), every system (\ref{psi1})--(\ref{psi4}), fulfilled for almost all
$t\in\mathbb{R},$ has a form
\begin{equation}
\label{system}
\psi_1'=-u_2\psi_3,\,\,\psi_2'=0,\,\,\psi_3'=u_2\sum\limits_{k=1}^3C_{23}^k\psi_k,\,\,
\psi_4'=u_2\left(C_{23}^2\psi_3+C_{23}^3\psi_4\right),\,\,u_2\equiv \frac{s}{F(se_2)}.
\end{equation}
Then equations (\ref{system}) are fulfilled for all $t\in\mathbb{R}$ and (\ref{system}) is a system of linear ordinary differential equations with constant coefficients, so all its solutions are real-analytic. 

Below  $F(u_1,u_2):=F(u),$ $F_U$ is the supporting Minkowski function of $U$:
\begin{equation}
\label{FU}
F_U(x,y)=\max\limits_{(u_1,u_2)\in U}\left(xu_1+yu_2\right).
\end{equation}

The following theorem is valid.

\begin{theorem}
\label{main1}
An abnormal extremal (\ref{dif}) is non-strictly abnormal if and only if for the basis $(e_1,e_2,e_3,e_4)$
from Lemma \ref{base}, either $C_{23}^1=C_{23}^2=0$, or $C_{23}^1\neq 0$ and $F_U\left(0,s\right)=1/F(0,s),$ $s=\pm 1$.
\end{theorem}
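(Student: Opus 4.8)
The plan is to test the fixed abnormal trajectory (\ref{dif}) for the existence of a \emph{second}, normal Pontryagin covector: by definition the extremal is non-strictly abnormal precisely when, besides its abnormal covector (with $M=0$), the same trajectory also satisfies the PMP as a normal extremal (with $M=1$). Since the trajectory determines the control $u(t)\equiv(0,u_2)$ with $u_2=s/F(se_2)$, I would search among nowhere-zero solutions $\psi(t)$ of the adjoint system (\ref{system}) along this trajectory for one fulfilling the maximum condition (\ref{m}) with $M=1$. Writing $u=u_1e_1+u_2e_2$ gives $\mathcal{H}=u_1\psi_1+u_2\psi_2$, so $M=F_U(\psi_1,\psi_2)$ and the maximum must be attained at $(0,u_2)$. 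Imposing $M=1$ forces $u_2\psi_2=1$, i.e. $\psi_2\equiv F(se_2)/s$ is a nonzero constant (consistent with $\psi_2'=0$), while the remaining content is geometric: $(\psi_1(t),\psi_2)$ has to be an outer normal of the convex body $U$ at the control point $P=(0,s/F(se_2))\in\partial U$.

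Equivalently, $\psi_1(t)$ must stay for all $t\in\mathbb{R}$ inside the slice $S=\{x:(x,\psi_2)\text{ is an outer normal of }U\text{ at }P\}$, which is a nonempty bounded interval ($P\in\partial U$ gives nonemptiness, and boundedness follows because $P$ lies on the $e_2$-axis and $U$ is a symmetric body with $0$ in its interior, so no outer normal at $P$ is horizontal). The observation I would isolate for later use is that $0\in S$ if and only if $(0,s)$ is an outer normal of $U$ at $P$, which by the definition (\ref{FU}) of $F_U$ is exactly the equality $F_U(0,s)=1/F(0,s)$.

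It then remains to decide, in each case of the theorem, whether the planar subsystem
\[
\psi_1'=-u_2\psi_3,\qquad \psi_3'=u_2\bigl(C_{23}^1\psi_1+C_{23}^2\psi_2+C_{23}^3\psi_3\bigr)
\]
admits a solution with $\psi_1(t)\in S$ for all $t$; throughout I use Lemma \ref{base} to normalize $C_{23}^2=0$ whenever $C_{23}^1\neq0$. If $C_{23}^1=C_{23}^2=0$, the constant choice $\psi_3\equiv0$, $\psi_1\equiv\psi_1^{*}$ with any $\psi_1^{*}\in S$ solves the system and meets the maximum condition (and $\psi_2\neq0$ keeps $\psi$ nowhere zero), so the trajectory is also normal and hence non-strictly abnormal. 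If $C_{23}^1\neq0$ (so $C_{23}^2=0$) the subsystem is linear, homogeneous, with nondegenerate matrix whose only equilibrium is the origin; I would argue that every solution bounded on all of $\mathbb{R}$ attains $\psi_1=0$ (it is either the equilibrium, or, in the purely imaginary case $C_{23}^3=0$, $C_{23}^1>0$, an ellipse centered at the origin), which forces $0\in S$, while conversely $0\in S$ lets the equilibrium $\psi_1\equiv\psi_3\equiv0$ serve; thus here non-strict abnormality is equivalent to $0\in S$, i.e. to $F_U(0,s)=1/F(0,s)$. Finally, if $C_{23}^1=0$ but $C_{23}^2\neq0$, the subsystem carries a nonzero constant forcing term $u_2C_{23}^2\psi_2$ and has no equilibrium, so every $\psi_1(t)$ is unbounded on $\mathbb{R}$ and cannot remain in the bounded set $S$: no normal covector exists and the extremal is strictly abnormal. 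These three cases assemble into exactly the stated criterion.

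The hard part will be the rigorous passage between the analytic maximum condition and its convex-geometric reformulation, and especially proving that $\psi_1=0$ is genuinely unavoidable when $C_{23}^1\neq0$: one must exclude non-constant bounded covectors that might dodge the value $\psi_1=0$, which I expect to handle through the qualitative ODE dichotomy (real saddle/node versus center) together with the symmetry of every center orbit about its equilibrium. A secondary technical point is the boundedness of $S$, resting on $P$ being non-extremal for $\pm u_1$, and the consistent use of the Lemma \ref{base} basis with $C_{23}^2=0$ when $C_{23}^1\neq0$, which is precisely what places the equilibrium at $\psi_1=0$ and thereby matches the clean threshold $F_U(0,s)=1/F(0,s)$.
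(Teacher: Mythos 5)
Your proposal is correct and follows essentially the same route as the paper: fix the control $(0,u_2)$, deduce $\psi_2\equiv 1/u_2$, confine $\psi_1(t)$ to a bounded interval by convexity/boundedness of the polar of $U$, and settle the three cases by a boundedness analysis of the linear adjoint subsystem --- the paper writes out explicit general solutions of the equivalent second-order ODE where you argue via the saddle/node/center dichotomy, reaching the identical conclusion that only the equilibrium or a center orbit (whose $\psi_1$-range contains $0$) survives, whence $F_U(0,s)=1/F(0,s)$. Two harmless slips to fix in a write-up: $U$ need not be symmetric, since $F$ is only an asymmetric seminorm defining a quasimetric (boundedness of your slice $S$ still follows from $0\in\mathrm{Int}(U)$ alone), and in the case $C_{23}^1=0\neq C_{23}^2$ you should verify from the explicit solution that $\psi_1$ itself, not merely the pair $(\psi_1,\psi_3)$, is unbounded.
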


\begin{proof} 
{\sl Necessity.} Assume that an abnormal extremal (\ref{dif}) is non-strictly abnormal. Then according to what has been said above 
there exists nowhere equal to zero real-analytic covector-function $\psi(t),$ which is a solution of ODE system (\ref{system}), and $\psi(t)(u(t))=F_U(\psi_1(t),\psi_2(t))\equiv 1$ for almost all $t\in\mathbb{R}.$ This and (\ref{u12}) imply that 
\begin{equation}
\label{fp}
\psi_2(t)\equiv 1/u_2,\quad F_U\left(\psi_1(t),1/u_2\right)\equiv 1,	
\end{equation} 
and points $\left(\psi_1(t),1/u_2\right)$, $t\in\mathbb{R}$, are dual for point $(0,u_2)$. Therefore the range of function $\psi_1(t)$, $t\in\mathbb{R}$, is a segment (degenerating to a point if the function $F$ is differentiable at the point $(0,u_2)$) because the body  $U^{\ast},$ dual to $U,$ is convex and bounded.
	
The system (\ref{system}) implies the ordinary differential equation
\begin{equation}
\label{equat}
\psi_1^{\prime\prime}-u_2C_{23}^3\psi_1^{\prime}+u_2^2C_{23}^1\psi_1+u_2C_{23}^2=0.
\end{equation} 
	
Assume that $C_{23}^1\neq 0$. On the ground of Lemma \ref{base}, we can suppose that $C_{23}^2=0$.  
Let us set $B:=\left(C_{23}^3\right)^2-4C_{23}^1$. Taking into account the theory from \cite{H}, it is easy to see that 
general solution of the equation (\ref{equat}) has a form 
$$\psi_1(t)=\left\{\begin{array}{lr}
A_1e^{\lambda_1t}+A_2e^{\lambda_2t},\,\,\lambda_{1,2}=u_2\left(C_{23}^3\pm\sqrt{B}\right)/2,\quad\text{если } B>0, \\
(A_1t+A_2)e^{\frac{1}{2}C_{23}^3u_2 t},\quad\text{если }B=0, \\
e^{\frac{1}{2}C_{23}^3u_2t}\left(A_1\cos\frac{u_2\sqrt{-B}t}{2}+A_2\sin\frac{u_2\sqrt{-B}t}{2}\right),\quad\text{если }B<0,
\end{array}\right.$$
where $A_1,\,A_2$ are arbitrary real numbers.
Since the function $\psi_1(t)$, $t\in\mathbb{R}$, is bounded then either $\psi_1(t)\equiv 0$ or
\begin{equation} 
\label{equat1}
\psi_1(t)=A_1\cos\frac{u_2\sqrt{-B}t}{2}+A_2\sin\frac{u_2\sqrt{-B}t}{2},\quad C_{23}^1<0,\,\,C_{23}^3=0.
\end{equation} 
Notice that for any $A_1,\,A_2\in\mathbb{R}$ the range of the function $\psi_1(t)$, defined by formula (\ref{equat1}), 
includes  $0$. Therefore in consequence of (\ref{fp}), $F_U\left(0,1/u_2\right)=1$ which is equivalent to the corresponding
equality in Theorem \ref{main1}.
	
Assume that $C_{23}^1=0$. By the theory from \cite{H}, general solution of the equation (\ref{equat}) has a form
$$\psi_1(t)=\left\{\begin{array}{lr}
A_1e^{C_{23}^3u_2t}+C_{23}^2t/C_{23}^3+A_2,\quad\text{если } C_{23}^3\neq 0, \\
-\frac{1}{2}C_{23}^2u_2t^2+A_1t+A_2,\quad\text{если }C_{23}^3=0, 
\end{array}\right.$$
where $A_1,\,A_2$ are arbitrary real numbers. Since the function $\psi_1(t)$, $t\in\mathbb{R},$ is bounded this implies that 
$C_{23}^2=0$ and $A_1=0$.
	
{\sl Sufficiency.}
Since $U$ is a convex bounded set in $\mathbb{R}^2$, $0\in{\rm Int}(U)$, and $F(0,u_2)=1$, where $u_2$ is defined in (\ref{u12})
for $s=1$ or $s=-1$, then there exists at least one $k\in\mathbb{R}$ such that $F_U(k,1/u_2)=1$. 
	
If $C_{23}^1=C_{23}^2=0$, we set
\begin{equation}
\label{funct}
\psi_1(t)\equiv k,\quad \psi_2(t)\equiv 1/u_2,\quad\psi_3(t)\equiv 0,\quad\psi_4(t)=e^{C_{23}^3u_2t}.
\end{equation} 
	
It is easy to see that functions $\psi_i(t)$, $i=1,\dots,4$, satisfy the ODE system (\ref{system}), 
\begin{equation}
\label{cond}
F_U(\psi_1(t),\psi_2(t))\equiv 1.
\end{equation} 
Consequently, abnormal extremal (\ref{dif})  satisfies PMP with $M(t)\equiv 1$ (see (\ref{m})), so it is not strictly normal.
	
If $C_{23}^1\neq 0$ then according to Lemma \ref{base} we can suppose that $C_{23}^2=0$. Let us define functions $\psi_i(t)$, $i=1,\dots,4$, by formulae (\ref{funct}), setting $k=0$.
It is easy to see that functions $\psi_i(t)$, $i=1,\dots,4$, satisfy the ODE system (\ref{system}). Furthermore, conditions
$F_U\left(0,s\right)=1/F(0,s),$ $s=\pm 1,$ and (\ref{u12}) imply (\ref{cond}). Thus abnormal extremal (\ref{dif}) 
satisfies PMP with $M(t)\equiv 1$ (see (\ref{m})) and therefore is not strictly abnormal.
	
Theorem \ref{main1} is proved.
\end{proof}

\begin{remark}
\label{subr}
If $d$ is a left-invariant sub-Riemannian metric on a connected 4-dimen\-sional Lie group $G$ with Lie algebra $\mathfrak{g},$ defined by an inner product $(\cdot,\cdot)$ on two-dimensional subspace $\mathfrak{p}\subset\mathfrak{g},$ $C^1_{23}\neq 0,$ and 
$(e_1,e_2,e_3,e_4)$ is a basis in  $\mathfrak{p}$ from Lemma \ref{base}, then conditions $F_{U}(0,s)=1/F(0,s)$ are equivalent
to equality $(e_1,e_2)=0.$ 
\end{remark}

\section{Abnormal extremals in the case $\dim(\mathfrak{p})=3$}

\begin{proposition}
\label{threed}
A four-dimensional connected Lie group $G$ with Lie algebra $\mathfrak{g}$ and a three-dimensional generating subspace  $\mathfrak{p}\subset\mathfrak{g}$ has abnormal extremals $($for arbitrary left-invariant quasimetric $d$ on $G,$
defined by a seminorm $F$ on $\mathfrak{p})$ if and only if $\mathfrak{p}_1=\mathfrak{p}\cap\mathfrak{N}(\mathfrak{p})\neq\{0\},$ where $\mathfrak{N}(\mathfrak{p})$ is the normalizer of $\mathfrak{p}$ in $\mathfrak{g}.$ Furthermore, $\dim(\mathfrak{p}_1)=1$ and every one-parameter subgroup $g=g(t)=\exp(tX),$ where $X\in\mathfrak{p}_1,$ $F(X)=1,$ is an abnormal extremal for $(G,d);$ there is no other abnormal extremal with origin $e\in G$. In addition, the extremal $g$ is strictly abnormal $($non-strictly abnormal$)$
for every quasimetric $d$ if and only if $\mathfrak{p}_1\subset [\mathfrak{p}_1,\mathfrak{p}]$ $($respectively, 
$\mathfrak{p}_1=\mathfrak{p}\cap\mathfrak{C}(\mathfrak{p}),$ where $\mathfrak{C}(\mathfrak{p})$ is the centralizer of
$\mathfrak{p}$ in $\mathfrak{g})$. 
\end{proposition}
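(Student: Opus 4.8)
The plan is to run the Pontryagin maximum principle exactly as in the proof of Proposition \ref{prop1}, now with $\dim(\mathfrak{p})=3$. First I would use that in the abnormal case $M\equiv 0$: since $0\in\mathrm{Int}(U)$ relative to $\mathfrak{p}$ and $\psi(t)(u)\le 0$ for all $u\in U$ with value $0$ at $u(t)$, the covector $\psi(t)$ must annihilate $\mathfrak{p}$, so $\psi(t)$ spans the one-dimensional annihilator $\mathfrak{p}^{\perp}$ and $\ker\psi(t)=\mathfrak{p}$. Differentiating the identity $\psi(t)(v)\equiv 0$ for $v\in\mathfrak{p}$ and inserting the adjoint relation $\psi'(v)=\psi([u(t),v])$ from (\ref{difur}) gives $\psi(t)([u(t),v])=0$ for all $v\in\mathfrak{p}$, i.e. $[u(t),\mathfrak{p}]\subseteq\ker\psi(t)=\mathfrak{p}$, whence $u(t)\in\mathfrak{p}\cap\mathfrak{N}(\mathfrak{p})=\mathfrak{p}_1$; together with $F(u(t))=1$ this forces $\mathfrak{p}_1\neq\{0\}$. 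For $\dim(\mathfrak{p}_1)\le 1$ I would note that if $\dim(\mathfrak{p}_1)\ge 2$, then picking $Z$ with $\mathfrak{p}=\mathfrak{p}_1+\mathbb{R}Z$, every bracket of basis vectors of $\mathfrak{p}$ lands in $\mathfrak{p}$, so $\mathfrak{p}$ is a subalgebra and cannot generate $\mathfrak{g}$; hence $\dim(\mathfrak{p}_1)=1$ whenever abnormal extremals exist. Conversely, for $X\in\mathfrak{p}_1$ with $F(X)=1$ the curve $\exp(tX)$ with constant control $u(t)\equiv X$ and any $\psi(t)\in\mathfrak{p}^{\perp}$ solving the adjoint ODE (which preserves $\mathfrak{p}^{\perp}$, as $X$ normalizes $\mathfrak{p}$) is a nowhere-zero lift with $M\equiv 0$. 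Since the control is forced into the line $\mathfrak{p}_1$, these one-parameter subgroups (and their left shifts) are the only abnormal extremals issuing from $e$.

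Next I would reduce (non-)strict abnormality to a convex-dynamical question. The curve $\exp(tX)$ is non-strictly abnormal precisely when the \emph{same} control $u(t)\equiv X$ also admits a normal lift: a nowhere-zero $\psi(t)$ solving the adjoint ODE with $\psi(t)(X)=\max_{u\in U}\psi(t)(u)=1$. Writing $A:=\ad X|_{\mathfrak{p}}\colon\mathfrak{p}\to\mathfrak{p}$, which is well defined because $X\in\mathfrak{N}(\mathfrak{p})$, and $\phi(t):=\psi(t)|_{\mathfrak{p}}$, the adjoint equation restricts to $\dot\phi=A^{\ast}\phi$ on $\mathfrak{p}^{\ast}$. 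Since $AX=[X,X]=0$, the value $\phi(t)(X)$ is automatically constant, and $\phi(t)(X)=1$ already forces $\psi(t)\neq 0$; so the only genuine requirement is that $\phi(t)$ remain in the set $N_X=\{\phi\in\mathfrak{p}^{\ast}:\phi(X)=1,\ \phi(u)\le 1\ \forall u\in U\}$ of unit supporting functionals of $U$ at $X$, which is convex, nonempty (as $X\in\partial U$) and bounded (as $0\in\mathrm{Int}(U)$), hence compact. Thus non-strict abnormality is equivalent to the existence of an orbit of the linear flow $e^{tA^{\ast}}$ staying in $N_X$ for all $t\in\mathbb{R}$.

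With this reformulation the two ``for every $d$'' statements follow from a boundedness dichotomy. If $\mathfrak{p}_1\subseteq[\mathfrak{p}_1,\mathfrak{p}]=\mathrm{Im}(A)$, write $X=Aw$ with $w\in\mathfrak{p}$; then $\frac{d}{dt}\phi(t)(w)=(A^{\ast}\phi)(w)=\phi(Aw)=\phi(X)=1$, so $\phi(t)(w)$ is unbounded, which is impossible inside the compact set $N_X$. Hence no normal lift exists for any $U$, and $\exp(tX)$ is strictly abnormal for every $d$. At the opposite extreme, if $\mathfrak{p}_1=\mathfrak{p}\cap\mathfrak{C}(\mathfrak{p})$, i.e. $A=0$, then $A^{\ast}=0$, every $\phi$ is stationary, and any $\phi\in N_X\neq\emptyset$ is a normal lift, so $\exp(tX)$ is non-strictly abnormal for every $d$.

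Finally I would prove the converses by constructing, in the remaining cases, a single quasimetric witnessing the opposite behaviour. If $X\notin\mathrm{Im}(A)$, I would choose $\phi^{\ast}\in\mathfrak{p}^{\ast}$ annihilating $\mathrm{Im}(A)$ with $\phi^{\ast}(X)=1$ (possible exactly because $X\notin\mathrm{Im}(A)$); then $A^{\ast}\phi^{\ast}=0$, and taking a bounded convex body $U$ whose boundary supports the hyperplane $\{\phi^{\ast}=1\}$ at $X$ makes the constant $\phi(t)\equiv\phi^{\ast}\in N_X$ a normal lift, so the extremal is non-strictly abnormal for that $d$; this is the converse of the strict-abnormality criterion. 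If instead $A\neq 0$ but $X\notin\mathrm{Im}(A)$, I would pick $\phi_0$ with $\phi_0(X)=1$ and $\phi_0|_{\mathrm{Im}(A)}\neq 0$ (such $\phi_0$ exists, since the affine hyperplane $\{\phi(X)=1\}$ is never contained in $(\mathrm{Im}\,A)^{\perp}$) and take $U$ smooth at $X$ with unique outer normal $\phi_0$, so $N_X=\{\phi_0\}$; the only candidate orbit is the constant $\phi_0$, which fails $\dot\phi=A^{\ast}\phi$ because $A^{\ast}\phi_0\neq 0$, whence the extremal is strictly abnormal for that $d$. Combined with the case $X\in\mathrm{Im}(A)$ already settled, this gives the converse of the non-strict criterion. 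I expect the main obstacle to be the reduction of the second paragraph — correctly identifying non-strict abnormality with a flow-invariant trajectory inside $N_X$ — together with verifying that the unit balls built in the last paragraph are genuine seminorms with the prescribed supporting behaviour at $X$; the boundedness argument and the two extreme cases are then short.
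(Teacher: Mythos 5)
Your proposal is correct, and its first half (deriving $u(t)\in\mathfrak{p}_1$ from $\psi(t)|_{\mathfrak{p}}\equiv 0$ plus the adjoint equation, the count $\dim\mathfrak{p}_1=1$, and the abnormal lift inside the flow-invariant annihilator $\mathfrak{p}^{\perp}$) coincides step for step with the paper's argument; your observation that $\dim\mathfrak{p}_1\ge 2$ would force $\mathfrak{p}$ to be a subalgebra is more explicit than the paper's ``it is clear.'' The second half is where you genuinely diverge. The paper asserts that any normal lift of $\exp(tX)$ must satisfy $\psi(t)(v)=0$ for all $v\in[\mathfrak{p}_1,\mathfrak{p}]$ and then splits into the cases $[\mathfrak{p}_1,\mathfrak{p}]=\{0\}$ versus ``$[\mathfrak{p}_1,\mathfrak{p}]$ parallel to a supporting hyperplane at $X$.'' You instead recast non-strict abnormality as the existence of an orbit of $e^{tA^{\ast}}$, $A=\ad X|_{\mathfrak{p}}$, remaining in the compact convex set $N_X$ of unit supporting functionals at $X$, and settle all four implications via the linear-growth obstruction ($X=Aw$ makes $\phi(t)(w)$ unbounded) together with explicit unit balls $U$ for the converses. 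Your route is more robust at exactly the delicate point: the paper's intermediate vanishing claim is stated without justification and can in fact fail --- if $A$ has nonzero purely imaginary spectrum and $U$ has a conical point at $X$, then $N_X$ contains a nonconstant periodic orbit of $e^{tA^{\ast}}$ that does not annihilate $[\mathfrak{p}_1,\mathfrak{p}]$ --- whereas your argument uses only the compactness of $N_X$ and the two constructions, and the proposition's conclusions come out the same. Two minor points to tidy: for the ``no other abnormal extremal'' claim you should note that a measurable control confined to the line $\mathfrak{p}_1$ with $F(u(t))=1$ takes at most two values and in any case keeps the trajectory inside the same one-parameter subgroup; and in your final paragraph the construction for $A\neq 0$, $X\notin\mathrm{Im}(A)$ must be combined with the already-settled case $X\in\mathrm{Im}(A)$ to conclude that $A\neq 0$ always admits a quasimetric making the extremal strictly abnormal.
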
 

\begin{proof}
Let we have an abnormal extremal $g(t),$ $t\in I,$  where $I\subset\mathbb{R}$ is some nonempty open connected subset. 
It is defined by conditions (\ref{difur}) for $u(t)\in U,$ $0\neq \psi(t)\in\mathfrak{g}^{\ast},$ $t\in I,$ where $u(t)$ and
$\psi(t)$ are respectively measureable and absolutely continuous functions such that $\psi(t)|\mathfrak{p}\equiv 0,$
$t\in I.$ Consequently, for almost all $t\in I,$ $\ad u(t)(\mathfrak{p})\subset\mathfrak{p},$ i.e. $u(t)\in\mathfrak{p}_1$ from
the statement of Proposition \ref{threed}. Since $\dim(\mathfrak{p})=3,$ $\mathfrak{p}$ generates $\mathfrak{g}$, and $F(u(t))=1,$ 
then it is clear that $\dim(\mathfrak{p}_1)=1,$ $u(t)$ is defined uniquely for these $t$ and we may suppose that 
$u(t)\equiv X$ for some $0\neq X\in \mathfrak{p}_1$. 

Assume now that $\mathfrak{p}_1\neq \{0\},$ hence $\dim(\mathfrak{p}_1)=1$. Let $X\in\mathfrak{p}_1,$ $F(X)=1$ and 
$\psi_0|\mathfrak{p}\equiv 0$ for some non-zero covecor $\psi_0\in\mathfrak{g}^{\ast}.$ It is evident that under supposition
$u(t)\equiv X$ there exist unique solutions $g(t)=\exp(tX),$ $\psi(t),$ $t\in\mathbb{R}$, of the system (\ref{difur}) 
with the initial conditions $g(0)=e,$ $\psi(0)=\psi_0.$ In addition, $\psi(t)|\mathfrak{p}\equiv 0,$ $t\in\mathbb{R},$ because  $X\in\mathfrak{p}_1;$ moreover, $\psi(t)\equiv \psi_0$ if $\ad X(\mathfrak{g})\subset \mathfrak{p}.$
Consequently, $g(t),$ $t\in\mathbb{R},$ is an abnormal extremal with respect to covector function $\psi(t),$ $t\in\mathbb{R}.$

Let us suppose that the above extremal $g(t)$ is non-stricly abnormal for $(G,d)$, i.e. for some function $\psi(t)\in\mathfrak{g}^{\ast},$ $1\equiv \psi(t)(X)=\max_{u\in U}\psi(t)(u)$ and (\ref{difur}) is fulfilled. 
Then $\psi(t)(v)=0$ for all $t\in\mathbb{R}$ and all $v\in [\mathfrak{p}_1,\mathfrak{p}].$ This maybe only in two cases: 
1) $[\mathfrak{p}_1,\mathfrak{p}]=\{0\},$ i.e. $\mathfrak{p}_1=\mathfrak{p}\cap\mathfrak{C}(\mathfrak{p});$ 
2) $[\mathfrak{p}_1,\mathfrak{p}]\neq \{0\},$ but this set is parallel in $\mathfrak{p}$ to the set 
$W=\{w\in \mathfrak{p}:\psi(0)(w)=1\},$ which is one of supporting plane to $U$ at its boundary point $X,$ hence 
$X\notin \ad X(\mathfrak{p})$. It is clear from here that $g(t),$ $t\in\mathbb{R},$ is a non-strictly abnormal extremal
for all $d$ in the case 1), and only for some $d$ if $\mathfrak{p}_1\not\subset [\mathfrak{p}_1,\mathfrak{p}]\neq \{0\};$ 
and $g$ is strictly abnormal extremal for all $d$ if and only if $\mathfrak{p}_1\subset [\mathfrak{p}_1,\mathfrak{p}].$ 
\end{proof}

\begin{remark}
Proposition \ref{threed} is valid for a connected Lie group $G$ of dimension $n\geq 5$ and a subspace  $\mathfrak{p}\subset\mathfrak{g}$ of codimension 1 with possible violation of equality $d=\dim(\mathfrak{p}_1)=1$ if $\mathfrak{p}_1\neq \{0\}$ (one can guarantee only inequalities $1\leq d\leq n-3$). 	
\end{remark}

\section{Examples}

The next theorem is a summary of Propositions \ref{solv3}, \ref{solv41}, \ref{solv410}, \ref{prop1} and Theorems \ref{simple}, \ref{solv49}, \ref{main1}.

\begin{theorem}
\label{mmm}
Let $G$ be a four-dimensional connected Lie group with Lie algebra $(\mathfrak{g},[\cdot,\cdot])$, 
$\mathfrak{p}\subset\mathfrak{g}$ be a two-dimensional subspace, generating $\mathfrak{g}$ by operation $[\cdot,\cdot]$, 
$d$ be arbitrary left-invariant quasimetric on $G$ defined by some seminorm $F$ on $\mathfrak{p}$. Then
	
1. Every abnormal extremal in $(G,d)$ is non-strictly abnormal in any of the following cases:   
1)  $\mathfrak{g}=\mathfrak{g}^0_{4,8}$; 2) $\mathfrak{g}=\mathfrak{g}_{4,10}$; 3) $\mathfrak{g}$ is decomposable and has
a solvable indecomposable three-dimensional subalgebra; 4) $\mathfrak{g}$ is indecomposable and has a three-dimensional
commutative ideal.

2. Let $\mathfrak{g}$ be one of Lie algebras $\mathfrak{g}_{3,7}\oplus\mathfrak{g}_1$, $\mathfrak{g}_{4,7}$, $\mathfrak{g}^{\alpha}_{4,8}$, $-1\leq\alpha<1$, $\alpha\neq 0$, $\mathfrak{g}^{\alpha}_{4,9}$, $\alpha\geq 0$,  $\mathfrak{g}=\mathfrak{g}_{3,6}\oplus\mathfrak{g}_1$ and the restriction of the Killing form $k_{\mathfrak{g_{3,6}}}$ 
to the projection of the subspace $\mathfrak{p}$ onto $\mathfrak{g}_{3,6}$ is negative definite. Then abnormal
extremal (\ref{dif}) (and every its left shift) in $(G,d)$ is non-strictly abnormal if and only if $F_U(0,s)=1/F(0,s),$ 
$s=\pm 1$.
	
3. If $\mathfrak{g}=\mathfrak{g}_{3,6}\oplus\mathfrak{g}_1$ and the restriction of the Killing form $k_{\mathfrak{g_{3,6}}}$ 
to the projection of $\mathfrak{p}$ onto $\mathfrak{g}_{3,6}$ is non-degenerate and with alternating signs, then every
abnormal extremal in $(G,d)$ is strictly abnormal.  

\end{theorem}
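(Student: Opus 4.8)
The plan is to read Theorem \ref{mmm} as a case-by-case application of the single criterion in Theorem \ref{main1}. For the basis $(e_1,e_2,e_3,e_4)$ of Lemma \ref{base}, and using that whenever $C_{23}^1\neq 0$ one may normalize $C_{23}^2=0$, Theorem \ref{main1} resolves into a clean trichotomy on the pair $(C_{23}^1,C_{23}^2)$: (A) if $C_{23}^1=C_{23}^2=0$, the extremal is non-strictly abnormal for every $d$; (B) if $C_{23}^1\neq 0$, it is non-strictly abnormal precisely when $F_U(0,s)=1/F(0,s)$, $s=\pm 1$; and (C) if $C_{23}^1=0$ but $C_{23}^2\neq 0$, then neither disjunct of Theorem \ref{main1} can be satisfied, so the extremal is strictly abnormal for every $d$. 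The entire proof thus reduces to recording, for each admissible pair $(\mathfrak{g},\mathfrak{p})$, the vanishing pattern of $C_{23}^1,C_{23}^2$ supplied by Section \ref{alg}, and then quoting (A), (B), or (C).

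First I would dispatch Part 1 by matching its four cases to clause (A). Case 3 (decomposable $\mathfrak{g}$ with a solvable indecomposable three-dimensional subalgebra) is covered by Proposition \ref{solv3}, which gives $[e_2,e_3]=0$; case 4 (indecomposable $\mathfrak{g}$ with a three-dimensional commutative ideal) by Proposition \ref{solv41}, again with $[e_2,e_3]=0$; cases 1 and 2 ($\mathfrak{g}^0_{4,8}$ and $\mathfrak{g}_{4,10}$) by Theorem \ref{solv49} and Proposition \ref{solv410} respectively, both yielding $C_{23}^1=C_{23}^2=0$. In each instance clause (A) applies verbatim, so every abnormal extremal is non-strictly abnormal.

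Next I would handle Part 2 via clause (B). For $\mathfrak{g}_{4,7}$, for $\mathfrak{g}^{\alpha}_{4,8}$ with $\alpha\neq 0$, and for $\mathfrak{g}^{\alpha}_{4,9}$ with $\alpha\geq 0$, Theorem \ref{solv49} records $C_{23}^1\neq 0$, $C_{23}^2=0$; for $\mathfrak{g}_{3,7}\oplus\mathfrak{g}_1$, and for $\mathfrak{g}_{3,6}\oplus\mathfrak{g}_1$ when $k_{\mathfrak{g}_{3,6}}$ is negative definite on $p(\mathfrak{p})$, Theorem \ref{simple} places us in its ``first case'' ($0\neq[e_2,e_3]\parallel e_1$), again with $C_{23}^1\neq 0$, $C_{23}^2=0$. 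Clause (B) then yields exactly the stated equivalence ``non-strictly abnormal $\iff F_U(0,s)=1/F(0,s)$.''

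Part 3 is where the genuine work lies. Here $\mathfrak{g}=\mathfrak{g}_{3,6}\oplus\mathfrak{g}_1$ and $k_{\mathfrak{g}_{3,6}}$ restricted to $\mathfrak{p}_1=p(\mathfrak{p})$ is nondegenerate and indefinite, so $\mathfrak{p}_1\in\mathrm{Gr}^{-}(2,3)$. The crux is to show that the Lemma \ref{base} basis then falls into the ``second case'' of Theorem \ref{simple}, namely the isotropic configuration $0\neq[e_2,e_3]\parallel e_2$, i.e. $C_{23}^2\neq 0$ and $C_{23}^1=0$; granting this, clause (C) forces strict abnormality for every $d$. The main obstacle is precisely this identification: one must track how the causal type of $\mathfrak{s}=\mathfrak{p}\cap\mathfrak{g}_{3,6}$ inside the indefinite plane $\mathfrak{p}_1$ controls the vanishing pattern of the structure constants, and confirm that the abnormal-extremal direction $e_2$ singled out by Proposition \ref{prop1} is the one along which $[e_2,e_3]$ is proportional to $e_2$ rather than to $e_1$ — it is exactly in this isotropic configuration that both disjuncts of Theorem \ref{main1} fail at once. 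Once the bookkeeping of Theorem \ref{simple} is invoked to pin down $C_{23}^1=0$, $C_{23}^2\neq 0$, the conclusion of Part 3 is immediate.
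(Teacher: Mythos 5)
Your global strategy is exactly the paper's: Theorem \ref{mmm} is proved there in one line as a summary of Proposition \ref{prop1} (every abnormal extremal is a left shift of one of the subgroups (\ref{dif})), the criterion of Theorem \ref{main1}, and the vanishing patterns of $C^1_{23}, C^2_{23}$ recorded in Propositions \ref{solv3}, \ref{solv41}, \ref{solv410} and Theorems \ref{simple}, \ref{solv49}. Your trichotomy (A)/(B)/(C) is a correct reading of Theorem \ref{main1} --- in particular clause (C), strict abnormality for every $d$ when $C^1_{23}=0\neq C^2_{23}$, since then both disjuncts of the criterion fail --- and your treatment of Parts 1 and 2 matches the paper case for case.

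Part 3, however, contains a genuine gap, and it sits precisely at the step you yourself flag as the crux. You assert that indefiniteness of $k_{\mathfrak{g}_{3,6}}$ on $\mathfrak{p}_1=p(\mathfrak{p})$ forces the Lemma \ref{base} basis into the second (isotropic) case of Theorem \ref{simple}, i.e. $C^1_{23}=0$, $C^2_{23}\neq 0$. This is contradicted by Theorem \ref{simple} itself: within ${\rm Gr}^-(2,3)$ there are three inequivalent configurations a), b), c), distinguished by the causal type of $\mathfrak{s}=\mathfrak{p}\cap\mathfrak{g}_{3,6}$ (spacelike, timelike, isotropic), and in a) and b) the pair $(e_1,e_2)$ falls into the \emph{first} case of Theorem \ref{simple} ($e_1\in\mathfrak{s}$, $e_2\notin\mathfrak{g}_{3,6}$), where $[e_2,e_3]$ is a nonzero multiple of $e_1$, so $C^1_{23}\neq 0$, $C^2_{23}=0$; one sees this directly from the computations (\ref{hyp}) in the proof of Theorem \ref{simple}, e.g. in a) $e_3=[X,Z+c]=\pm Y$ and $[e_2,e_3]=[Z,\pm Y]=\pm X=\pm e_1$. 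Hence in sub-cases a), b) it is clause (B), not (C), that applies: by Theorem \ref{main1} together with Remark \ref{subr}, the abnormal extremal (\ref{dif}) is non-strictly abnormal for any inner product with $(e_1,e_2)=0$, so "strictly abnormal for every $d$" fails there. The identification you hope to "confirm" therefore cannot be confirmed: a hypothesis bearing only on the projection $\mathfrak{p}_1$ does not determine the vanishing pattern of $(C^1_{23},C^2_{23})$; only the isotropic position of $\mathfrak{s}$ inside $\mathfrak{p}_1$ (configuration c), the fourth equivalence class of Theorem \ref{simple}) yields $C^1_{23}=0\neq C^2_{23}$ and hence clause (C). To make Part 3 follow from the paper's toolkit you must restrict to that isotropic configuration, while sub-cases a) and b) belong with the $F_U$-dichotomy of Part 2; as written, your proof of Part 3 rests on a false step and does not go through.
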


\begin{example}
Every abnormal extremal on the Engel group with nilpotent Lie algebra $\mathfrak{g}_{4,1}=\mathfrak{eng}$ and any
left-invariant quasimetric $d$, defined by some seminorm $F$ on $\mathfrak{p}=\rm{span}(E_1,E_3,E_4)\subset\mathfrak{eng},$
is one of two 1-parameter subgroups $g(t)=\exp(tsE_1/F(sE_1)),$ $s=\pm 1,$ or its left shift. In consequence of
Proposition \ref{threed} and centrality of $E_1$ these extremals are non-strictly abnormal.	
\end{example}

\begin{example}
\label{g43}	
Every abnormal extremal on connected (solvable) Lie group $G$ with Lie algebra $\mathfrak{g}=\mathfrak{g}_{4,3}$ and any
left-invariant quasimetric $d$, defined by some seminorm $F$ on $\mathfrak{p}=\rm{span}(E_1,E_3,E_4)\subset\mathfrak{g},$
is one of two 1-parameter subgroups $g(t)=\exp(tsE_1/F(sE_1)),$ $s=\pm 1,$ or its left shift. In consequence of
Proposition \ref{threed} and equality $[E_1,E_4]=E_1$ these extremals are strictly abnormal.	
\end{example}

An interesting partial case of the last example is $(G,d_1)$ with left-invariant sub-Riemannian metric $d_1$ 
defined by the inner product $(\cdot,\cdot)$ on $\mathfrak{p}$ with the orthonormal basis $(E_1,E_3,E_4).$

\begin{proposition}
\label{nong}	
Strictly abnormal extremals $g(t)=\exp(stE_1),$ $s=\pm 1,$ in $(G,d_1)$ are not geodesics, i.e. none their 
segment $g(t),$ $t\in [t_0,t_1],$ where $t_0< t_1,$ is shortest.	
\end{proposition}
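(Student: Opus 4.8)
The plan is to show that $d_1(e,\exp(sTE_1))$ is strictly smaller than $T$ for every $T>0$, while $T$ is exactly the length of the extremal segment on $[0,T]$. Since left translations are isometries of $(G,d_1)$ and $\{\exp(sE_1)\}$ is a one-parameter subgroup, every segment $g|_{[t_0,t_1]}$ is carried to such a segment issuing from $e$; moreover the linear map fixing $E_2,E_3,E_4$ and sending $E_1\mapsto-E_1$ is an automorphism of $\mathfrak{g}$ that is orthogonal on $\mathfrak{p}$, hence induces an isometry of $(G,d_1)$ interchanging the cases $s=\pm1$. Thus it suffices to exhibit, for $s=1$ and any $T>0$, a horizontal curve from $e$ to $\exp(TE_1)$ shorter than $T$.

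First I would introduce convenient coordinates. Because $E_2$ is central and $\mathrm{ad}(E_4)$ acts on $\mathrm{span}(E_1,E_2,E_3)$ by $E_1\mapsto-E_1$, $E_3\mapsto-E_2$, $E_2\mapsto0$, the group is $\mathbb{R}^3\rtimes\mathbb{R}$ with product $(\mathbf v,\tau)\cdot(\mathbf v',\tau')=(\mathbf v+A(\tau)\mathbf v',\tau+\tau')$, where $A(\tau)=\exp(\tau\,\mathrm{ad}E_4)$. Writing a horizontal velocity as $dl_g(u_1E_1+u_3E_3+u_4E_4)$, one computes the control system $\dot x_1=e^{-\tau}u_1$, $\dot x_2=-\tau u_3$, $\dot x_3=u_3$, $\dot\tau=u_4$ with $\tau=x_4$, and $d_1$-length element $\sqrt{u_1^2+u_3^2+u_4^2}\,dt$. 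The strictly abnormal extremal of Example~\ref{g43} is $u\equiv E_1$, giving $g(t)=(t,0,0,0)=\exp(tE_1)$ of length $T$ on $[0,T]$.

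The decisive observation is that horizontal curves with $u_3\equiv0$ keep $x_2\equiv x_3\equiv0$ and remain in the two-dimensional subgroup $H=\exp(\mathrm{span}(E_1,E_4))$; on them the length element is $\sqrt{u_1^2+u_4^2}\,dt$, and since $u_1=e^{\tau}\dot x_1$, $u_4=\dot\tau$, the induced metric is $e^{2\tau}\,dx_1^2+d\tau^2$. The substitution $y=e^{-\tau}>0$ puts this in the upper half-plane form $(dx_1^2+dy^2)/y^2$, so the induced left-invariant metric on $H$ is the hyperbolic (Lobachevsky) metric, and the extremal is the horocyclic arc $y\equiv1$ from $(0,1)$ to $(T,1)$. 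As competitor I would take the hyperbolic geodesic between these endpoints (the circular arc centred on $\{y=0\}$); it uses only the $E_1,E_4$ directions, so its $G$-length equals its hyperbolic length, whence
\[
d_1(e,\exp(TE_1))\le\mathrm{arccosh}\!\left(1+\tfrac{T^2}{2}\right)<T,
\]
the last inequality holding for all $T>0$ because $T-\mathrm{arccosh}(1+T^2/2)$ vanishes at $0$ and has derivative $1-(1+T^2/4)^{-1/2}>0$. As the extremal segment has length exactly $T$, it is not shortest.

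I do not expect a serious obstacle once the hyperbolic structure is recognized; the only point requiring attention is that the competing curve terminates exactly at $\exp(TE_1)$ and not at a perturbed endpoint. This is guaranteed by keeping $u_3\equiv0$ (so the drifts in $x_2,x_3$ vanish identically) and by the fact that the chosen geodesic returns to height $y=1$, i.e. to $\tau=x_4=0$, while reaching $x_1=T$. Recognizing that $H$ carries precisely the Lobachevsky metric, and that the abnormal extremal is a horocycle rather than a geodesic there, is the conceptual heart of the argument.
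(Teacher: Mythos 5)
Your argument is correct, and it reaches the conclusion by a route genuinely different from (in a sense dual to) the paper's. The paper projects \emph{downward}: it quotients first by the central subgroup $N=\{\exp(tE_2)\}$ and then by $H=\{\exp(tE_3)\}$, notes that each canonical projection is a submetry (hence would carry a shortest arc to a shortest arc, by \cite{BerGui}), and lands in the two-dimensional non-commutative group with $[E_1,E_4]=E_1$, i.e.\ the Lobachevsky plane, where $\exp(tE_1)$ is a horizontal line in the upper half-plane model --- a horocycle, not a geodesic. You instead pass \emph{upward} into the two-dimensional subgroup generated by $E_1,E_4$: you verify that curves confined to it are automatically horizontal (indeed $\dot x_3=0$ forces $u_3=0$ and then $\dot x_2=0$ forces $u_2=0$), that the induced metric $e^{2\tau}\,dx_1^2+d\tau^2$ is again hyperbolic with $\exp(tE_1)$ the horocycle $y\equiv 1$, and you beat the extremal by the explicit competitor of length $\operatorname{arccosh}(1+T^2/2)<T$. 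Both proofs turn on the same recognition that the pair $(E_1,E_4)$ with $[E_1,E_4]=E_1$ carries the hyperbolic plane and that the abnormal extremal is a horocycle there; what yours buys is a more elementary and quantitative statement --- no submetry machinery, and the explicit bound $d_1(e,\exp(TE_1))\le\operatorname{arccosh}(1+T^2/2)$ --- while the paper's submetry argument is softer and also records that the Goh condition already rules out abnormal geodesics here since $\mathfrak{p}+[\mathfrak{p},\mathfrak{p}]=\mathfrak{g}$. Two minor remarks: the automorphism $E_1\mapsto-E_1$ is not needed, since the identical competitor construction handles $s=-1$ directly (reflect $x_1\mapsto-x_1$ in the half-plane); and your coordinates describe the simply connected model $\mathbb{R}^3\rtimes\mathbb{R}$, but for a non-simply-connected $G$ the covering projection preserves lengths of horizontal curves, so the upper bound on $d_1(e,\exp(TE_1))$ descends and the conclusion is unaffected.
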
	

\begin{proof} Proposition is true because of the equality $\mathfrak{p}+[\mathfrak{p},\mathfrak{p}]=\mathfrak{g}$ and 
so-called Goh optimality condition for abnormal geodesics (see \cite{AS}) which implies that every parameterized by
arc length geodesic in $(G,d_1)$ is normal. We shall give an independent geometric proof in this case, 
reducing it to the question on geodesics of left-invariant Riemannian metric on a simply connected non-compact non-commutative two-dimensional Lie group.

Subgroup $N=\{\exp(tE_2), t\in\mathbb{R}\}\subset G$ is a central subgroup of the isometry group of $(G,d_1),$ 
consisting of left shifts $l_g,$ $g\in G.$ Therefore the canonical projection $p_3: (G,d_1)\rightarrow (G_3=G/N,d_3)$
onto $G_3$ with left-invariant Riemannian metric $d_3,$ defined by the inner product $(\cdot,\cdot)$ with the orthonormal
basis $(E_1,E_3,E_4)$ of Lie algebra $\mathfrak{g}_3$ of this group with relation $[E_1,E_4]=E_1$ and central element $E_3,$
is a submetry \cite{BerGui}. Notice that $G_3\cong G_2\times H,$ where $G_2$ is a two-dimensional non-commutative
Lie group, while $H=\{\exp(tE_3),t\in\mathbb{R}\}$ is a central subgroup of the group $G_3.$ Therefore the canonical
projection $p_2: (G_3,d_3)\rightarrow (G_2=G_3/H,d_2)$ onto $G_2$ with left-invariant Riemannian metric $d_2,$ 
defined by the inner product $(\cdot,\cdot)$ with the orthonormal basis $(E_1,E_4)$ of Lie algebra $\mathfrak{g}_2$ 
of this Lie group with relation $[E_1,E_4]=E_1$, is a submetry, moreover, a Riemannian submersion \cite{BerGui}. 
Consequently, the composition $p=(p_2\circ p_3): (G,d_1)\rightarrow (G_2,d_2)$ is a submetry and so 
$g_2(t)=p(g(t))=\exp(tE_1)\in G_2,$ $t\in\mathbb{R},$ is geodesic if $g(t),$ $t\in\mathbb{R},$ is geodesic. 
But this is impossible because $(G_2,d_2)$ is isometric to Lobachevsky plane with constant sectional curvature -1, 
and in the well-known conformal model of this plane in the upper semiplane, $g_2(t),$ $t\in\mathbb{R},$ is a horizontal 
straight line which is not geodesic \cite{Mil}, \cite{BerZub}. Thus obtained controversy proves Proposition \ref{nong}. 

\end{proof} 

\begin{example}
(\cite{LZ}, Section 9.5.)
Let us consider Lie group $G=\SO(3)\times \mathbb{R}$ with Lie algebra $\mathfrak{g}_{3,7}\oplus\mathfrak{g}_1$  
and left-invariant sub-Riemannian metric $d,$ defined by the inner product $(\cdot,\cdot)$ on subspace
$\mathfrak{p}=\rm{span}(f,g)$ with the orthonormal basis $(f,g),$ where $f=E_1+E_4,$ $g=E_1+E_2+2E_4.$ 
It is stated that one-parameter subgroups $g_s(t)=\exp(stg),$ $t\in\mathbb{R},$ $s=\pm 1,$ are unique 
(strictly) abnormal parameterized by arc length geodesics with origin $e.$ 
\end{example}
	
Let us compare this with our results. It is clear that $e_1=f,$ $e_2=g,$ 
$$e_3=[e_1,e_2]=E_3,\quad e_4=[e_1,e_3]=-E_2,\quad [e_2,e_3]=E_1-E_2= 2e_1 -e_2.$$
Thus, $C_{23}^4=0,$ $C_{23}^1=2\neq 0,$ but $C_{23}^2=-1\neq 0.$ In consequence of Proposition \ref{prop1}, $g_s,$ $s=\pm 1$, 
are unique abnormal extremals with origin $e.$ Following to Lemma \ref{base}, in order that $C^2_{23}$ to become zero, 
we change $e_1$ by $$e^{\prime}_1 =e_1 + (C^2_{23}/C^1_{23})e_2 = e_1-e_2/2 = E_1+E_4-(E_1+E_2+2E_4)/2=(E_1-E_2)/2.$$ 
Since $(e^{\prime}_1,e_2)\neq 0,$ then it follows from p.~2 of Theorem \ref{mmm} and Remark \ref{subr} that abnormal extremals $g_s,$ 
$s=\pm 1,$ are strictly abnormal.

\end{document}